\newcolumntype{M}{>{\centering\arraybackslash}m{\dimexpr.25\linewidth-2\tabcolsep}}
\newtheorem{theorem}{Theorem}[section]
\newtheorem{lemma}[theorem]{Lemma}
\newtheorem{proposition}[theorem]{Proposition}
\newtheorem{remark}[theorem]{Remark}
\theoremstyle{remark}
\newcommand{\mR}{\mathbb{R}}
\newcommand{\mE}{\mathbb{E}}
\newcommand{\mS}{\mathbb{S}}
\newcommand{\cP}{\mathcal{P}}
\newcommand{\ux}{\underline{x}}
\newcommand{\uo}{\underline{\omega}}
\newcommand{\uD}{\underline{D}}
\newcommand{\bfx}{\mathbf{x}}
\newcommand{\upx}{\partial_{\underline{x}}}
\newcommand{\pI}{\partial_{x_i}}
\newcommand{\pR}{\partial_{r}}
\begin{document}
 
\title
{Algebraic approach to slice monogenic functions}
\author{Lander Cnudde *}
\email{Lander.Cnudde@UGent.be}
\author{Hendrik De Bie *}
\email{Hendrik.DeBie@UGent.be}
\author{Guangbin Ren $^\ddagger$}
\email{rengb@ustc.edu.cn}
\address{* Department of Mathematical Analysis\\Faculty of Engineering and Architecture\\Ghent University\\Galglaan 2, 9000 Gent\\ Belgium.}
\address{$^\ddagger$ Department of Mathematics\\ USTC, 96 Jinzhai Road, Hefei, Anhui 230026 \\ P.R.China}

\date{\today}
\keywords{Slice Dirac operator, Clifford-Hermite polynomials, slice monogenic functions}

\begin{abstract}
In recent years, the study of slice monogenic functions has attracted more and more attention in the literature. In this paper, an extension of the well-known Dirac operator is defined which allows to establish the Lie superalgebra structure behind the theory of slice monogenic functions. Subsequently, an inner product is defined corresponding to this slice Dirac operator and its polynomial null-solutions are determined. Finally, analogues of the Hermite polynomials and Hermite functions are constructed in this context and their properties are studied.

\end{abstract}

\maketitle

\section{Introduction}
\setcounter{equation}{0}

Recently, a lot of attention has been paid to developing a new theory of slice monogenic functions (see e.g.\cite{colombo2009slice} and the book \cite{colombo2011noncommutative}). It entails a generalisation of the theory of complex analysis to higher dimensions, where Clifford algebras take over the role played by the complex numbers while still preserving the essential features of complex analysis. Up to now, the focus in this line of research was in establishing analytic results in the flavour of complex analysis (see \cite{rungetheorem,gentili,quaternionic}) or distribution theory (see \cite{distr}), as well as the development of a new functional calculus for noncommutative operators (see \cite{colombo2009slice, colombo2011noncommutative, spectrum, newfunctional, smonogenickernel}). Also more elaborate generalisations, e.g. using real alternative algebras, have been investigated (see \cite{alternative}).
In other approaches to hypercomplex analysis, an important role is played by an underlying algebraic structure, namely the Lie superalgebra $\mathfrak{osp}(1\lvert 2)$, which allows to find a representation theoretic interpretation of various function space decompositions (see \cite{3032075}). It also allows for the introduction of a generalized Fourier transform (see e.g. the review \cite{de2012clifford}).
Therefore, the main aim of the present paper is to show how the same algebraic structure appears in the theory of slice monogenic functions. This is achieved by rewriting the relevant differential operator (see \cite{colombo2013nonconstant}), whose kernel consists of the set of slice monogenic functions, in a more suitable form. We also construct the main ingredients to establish a Fourier transform in a subsequent paper, such as a proper Hilbert module of functions and a subspace spanned by the set of so-called Clifford-Hermite functions.
The theory of slice monogenic functions acts within the framework of an $m$-dimensional real Clifford algebra $Cl_{m}$. This algebra has $m$ basis vectors $\widetilde{e_i},i=1\ldots m$, which satisfy the relations
\begin{equation}\label{commutation}
\widetilde{e_i} \widetilde{e_j} + \widetilde{e_j} \widetilde{e_i} = -2 \delta_{ij}, \quad i,j=1,\ldots, m.
\end{equation}
The notion of a paravector  in $Cl_m$ allows for a higher-dimensional generalisation of the classical Cauchy-Riemann operator $\partial_{\overline{z}}=\frac{1}{2}\left(\partial_x+i\partial_y\right)$ from complex analysis. A paravector $\widetilde{\text{x}}$ is defined as the sum of a scalar $x_0$ and a $1$-vector $\widetilde{\ux}$:
\begin{align*}
\widetilde{\text{x}} &= x_0 + x_1\widetilde{e_1} + \ldots + x_m\widetilde{e_m}\\
&=x_0 + \widetilde{\ux}.
\end{align*}
In this way an element $x=(x_0,x_1,\ldots,x_m)\in \mathbb{R}^{m+1}$ is identified with a paravector $\widetilde{\text{x}} \in Cl_m^{(0)} \oplus Cl_m^{(1)}$, where $Cl_m^{(k)}$ denotes the space of $k$-vectors in $Cl_m$. The scalar part $x_0$ of x can be regarded as the analogue of the real part of a complex number.
A first generalisation of $\partial_{\overline{z}}$ is thus defined as
\begin{equation*}
\widetilde{D}^{CR}=\partial_{x_0} + \sum_{i=1}^m \widetilde{e_i} \pI = \partial_{x_0} + \partial_{\widetilde{\ux}},
\end{equation*}
with $\partial_{\widetilde{\ux}}=\sum_{i=1}^m\widetilde{e_i} \pI$ the classical Dirac operator. This operator is called the \emph{Cauchy-Riemann operator} and its null-solutions are said to be \emph{monogenic} (see e.g. \cite{brackx1982clifford,delanghe1992clifford} for a detailed study of such functions). Because of \eqref{commutation}, $\widetilde{D}^{CR}$ equals the classical Cauchy-Riemann operator $\partial_{\overline{z}}$ when $m=1$.
A second generalisation of $\partial_{\overline{z}}$ is based on the polar form of the paravector. Writing $\widetilde{\ux}=r\widetilde{\uo}$ with $r=|\widetilde{\ux}|=\sqrt{x_1^2+\ldots+x_m^2}$ and $\widetilde{\uo}=\widetilde{\ux}/r$, the scalar $x_0$ is the real part of $\widetilde{\text{x}}=x_0+r\widetilde{\uo}$ and the $1$-vector $\widetilde{\uo}$ behaves as the classical imaginary unit because $\widetilde{\uo}^2=-1$. Therefore we define
\begin{equation*}
\widetilde{D}_0^{CR}=\partial_{x_0} + \widetilde{\uo} \partial_{r}
\end{equation*}
and call this operator the \emph{slice Cauchy-Riemann operator}. Defining the Euler operator in $\mR^{m+1}$ as
\begin{equation*}
\mE=x_0 \partial_{x_0}+\mE_m=x_0 \partial_{x_0}+\sum_{i=1}^m x_i \pI,
\end{equation*}
$\widetilde{D}_0^{CR}$ can be rewritten as
\begin{equation*}
\widetilde{D}_0^{CR}=\partial_{x_0} + \frac{\widetilde{\ux}}{\vert \widetilde{\ux} \vert ^2}\mE_m.
\end{equation*}
\begin{remark}
Note that in \cite{colombo2013nonconstant} the operator $|\widetilde{\ux}|^2\widetilde{D}^{CR}_0$ was introduced. Its null-solutions correspond to the class of slice monogenic functions as studied in \cite{colombo2009slice,colombo2011noncommutative}. The additional factor $|\ux|^2$ does not affect the kernel of $\widetilde{D}^{CR}_0$ and hence null-solutions of $\widetilde{D}^{CR}_0$ are also slice monogenic.
\end{remark}
The main idea in our paper is to replace the paravector $\widetilde{\text{x}} \in Cl_m$ by a $1$-vector $\bfx$ in the higher-dimensional $Cl_{m+1}$-algebra. The element $x=(x_0,x_1,\ldots,x_m)\in \mathbb{R}^{m+1}$ is now identified with the $1$-vector $\bfx \in Cl_{m+1}$ defined as
\begin{align*}
\bfx &= x_0e_0 + x_1e_1 + \ldots + x_me_m\\
&= x_0 e_0 + \ux,
\end{align*}
where $e_i,i=0,\ldots,m$ are the basis vectors of $Cl_{m+1}$ satisfying the relations
\begin{equation*}
e_i e_j + e_j e_i = -2 \delta_{ij}, \quad i,j=0,\ldots, m.
\end{equation*}
One thus has $\bfx^2 = -\vert x \vert^2 = -(x_0^2+r^2)$. 
The procedure to go over from $\widetilde{\text{x}} \in Cl_m$ to $\bfx \in Cl_{m+1}$ consists of left-multiplying $\widetilde{\text{x}}$ with the new basis vector $\widetilde{e_0}$. Because the bivectors $e_i=\widetilde{e_0}\widetilde{e_i}, i=1,\ldots,m$, obey the same relations as the basis vectors $\widetilde{e_i}$ themselves, they can be regarded as the other $m$ basis vectors of $Cl_{m+1}$. 
The transformation 
\begin{align}
\widetilde{e_0}= e_0 \nonumber \\
\widetilde{e_0}\widetilde{e_i}= e_i
\label{transfo}
\end{align}
thus results in the basis $e_i,i=0,\ldots,m$ for $Cl_{m+1}$ and permits a definition of the previous differential operators. For the Cauchy-Riemann operator $\widetilde{D}^{CR}$ the procedure yields
\begin{equation*}
D=e_0\partial_{x_0} + \sum_{i=1}^m e_i \pI =  \sum_{i=0}^m e_i \pI,
\end{equation*}
which is the \emph{Dirac operator} on $Cl_{m+1}$. Analogously, the extension of the slice Cauchy-Riemann $\widetilde{D}_0^{CR}$ operator reads
\begin{equation*}
D_0=e_0\partial_{x_0} + \uo \partial_{r}=e_0 \partial_{x_0} + \frac{\ux}{\vert \ux \vert ^2}\sum_{i=1}^m x_i \pI,
\end{equation*}
which we call the \emph{slice Dirac operator}. Again there is a one to one correspondence between null-solutions of $\widetilde{D}_0^{CR}$ and $D_0$ via \eqref{transfo}. For the rest of the paper, slice monogenic functions are therefore defined to be $Cl_{m+1}$-valued solutions of $D_0$. Together with the multiplication operator $\bfx$, the slice Dirac operator $D_0$ gives a realisation of the $\mathfrak{osp}(1\vert2)$-superalgebra, as will be obtained in Theorem \ref{theorem1}.\\
A general overview of the various Cauchy-Riemann and Dirac operators is given in table \ref{table1}.
\renewcommand\theadset{\renewcommand\arraystretch{1}\setlength\extrarowheight{1pt}}
\setlength{\extrarowheight}{5pt}
\begin{table}[h]
\begin{tabular}{m{0.45\linewidth}m{0.45\linewidth}}
\hline \multicolumn{2}{c}{\thead{\textbf{Algebra}} }  \\ \hline
\thead{$Cl_{m}$} & \thead{$Cl_{m+1}$} \\
\hline \multicolumn{2}{c}{\thead{\textbf{Object}} }\\ \hline
\thead{paravector\\ $x_0+\widetilde{\ux}$} & \thead{$1$-vector\\ $\bfx=x_0e_0+\ux$} \\
\hline \multicolumn{2}{c}{\thead{\textbf{First Cauchy-Riemann generalisation} }}\\ \hline
\thead{Cauchy-Riemann operator\\ $\widetilde{D}^{CR}=\partial_{x_0}+\partial_{\widetilde{\ux}}$} & \thead{Dirac operator\\ $D=e_0\partial_{x_0}+\upx$} \\
\hline \multicolumn{2}{c}{\thead{\textbf{Second Cauchy-Riemann generalisation} }}\\ \hline
\thead{slice Cauchy-Riemann operator\\ $\widetilde{D}_0^{CR}=\partial_{x_0}+\frac{\widetilde{\ux}}{|\widetilde{\ux}|^2}\mE_m$} & \thead{slice Dirac operator\\ $D_0=e_0\partial_{x_0}+\frac{\ux}{|\ux|^2}\mE$}
\end{tabular}
\caption{Overview of the various Cauchy-Riemann and Dirac operators.\label{table1}}
\end{table}
Throughout the article, the notation $f(\bfx)$ is used for the function $f(x_0,\ldots,x_m)$ depending on the separate coordinates $(x_0,\ldots,x_m)$. Analogously $g(\uo)$ is used for the function $g(\omega_1,\ldots,\omega_m)$.
The paper is organised as follows. In section 2 we demonstrate that the definitions of $\bfx$, $D_0$ and $\mE$ give a realisation of the $\mathfrak{osp}(1\vert2)$-superalgebra. In section 3, a Hilbert module is constructed such that $D_0$ is self-adjoint and section 4 treats the polynomial null-solutions of this operator. In section 5 we define Clifford-Hermite polynomials in analogy to their classical counterparts and investigate their properties, such as the associated differential equation. In section 6, Clifford-Hermite functions are defined and normalised. As in the classical case, they turn out to be solutions of a scalar differential equation. Conclusions and suggestions for future research can be found in section 7.
%
\section{The $\mathfrak{osp}(1|2)$-superalgebra}\label{osp}
\setcounter{equation}{0}
As stated in the introduction, the multiplication operator $\bfx$ together with the slice Dirac operator $D_0$ and the full Euler operator $\mE=x_0\partial_{x_0}+\mE_m$ exhibit a particular algebraic structure.
\begin{theorem}\label{theorem1} 
The operators $\mathbf{x}$, $D_0$ and $\mE$ constitute a Lie superalgebra, isomorphic with $\mathfrak{osp}(1|2)$, with relations
\begin{center}
\begin{tabular}{rlrl}
\emph{(i)} & $\{\bfx, \bfx \} = -2 \lvert \bfx \lvert^2$ & \emph{(ii)} & $ \{D_0, D_0 \} = -2 (\partial_{x_0}^2 + \pR^2)$\\
\emph{(iii)} & $\{\bfx, D_0 \} = -2 \left(\mE + 1\right)$ & \emph{(iv)} & $ [\mE + 1 , D_0 ] = -D_0$\\
\emph{(v)} & $[\lvert \bfx\lvert^2, D_0] = -2 \bfx$ & \emph{(vi)} & $ [\mE + 1, \bfx ] = \bfx$\\
\emph{(vii)} & $[\partial_{x_0}^2 + \pR^2, \bfx] = 2 D_0$ & \emph{(viii)} & $ [\mE + 1 , \partial_{x_0}^2 + \pR^2] = -2(\partial_{x_0}^2 + \pR^2)$\\
\emph{(ix)} & $[\partial_{x_0}^2 + \pR^2, \lvert \bfx\lvert^2] = 4 \left(\mE + 1\right)$ & \emph{(x)} & $ [\mE + 1 , \lvert \bfx\lvert^2 ] = 2\lvert \bfx \lvert^2$.
\end{tabular}
\end{center}
\end{theorem}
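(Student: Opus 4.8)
The strategy is to verify the ten identities by direct computation, organised so that the recurring cancellations are visible and so that a homogeneity argument disposes of four of them at no cost. The whole calculation rests on two elementary observations. First, in polar form $\bfx = x_0 e_0 + r\uo$ and $D_0 = e_0\partial_{x_0} + \uo\,\pR$, where $\uo = \ux/r$ and $\pR = r^{-1}\mE_m$, so that $\mE = x_0\partial_{x_0} + r\,\pR$; the only Clifford facts ever used are $e_0^2 = \uo^2 = -1$ and $e_0\uo + \uo e_0 = 0$ (because $\uo$ is a linear combination of $e_1,\dots,e_m$, each anticommuting with $e_0$). Second, on the analytic side $e_0$ is constant, hence commutes with $\partial_{x_0}$ and $\pR$, while $\uo$ commutes with $\partial_{x_0}$ and is annihilated by $\pR$, being a function of the angular variables only.

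With these in hand, (i) is immediate from $\bfx^2 = -|\bfx|^2$; (ii) follows by squaring $D_0$, the two cross terms cancelling by $e_0\uo + \uo e_0 = 0$ and leaving $D_0^2 = -(\partial_{x_0}^2 + \pR^2)$; and (iii) follows by expanding $\bfx D_0 + D_0\bfx$, where the off-diagonal Clifford terms again cancel, the Leibniz contributions from $\partial_{x_0}(x_0\,\cdot)$ and $\pR(r\,\cdot)$ produce the constant $-2$, and the surviving first-order part is $-2(x_0\partial_{x_0} + r\,\pR) = -2\mE$. The mixed relations (v), (vii), (ix) are of the same nature: each is a commutator of the scalar multiplication operator $|\bfx|^2 = x_0^2 + r^2$ against $D_0$, against $\bfx$, or against $\partial_{x_0}^2 + \pR^2$, and each reduces to the one-variable identities $[\partial_t^2,t] = 2\partial_t$, $[t^2,\partial_t] = -2t$, $[\partial_t^2,t^2] = 2 + 4t\partial_t$ applied in the $x_0$- and $r$-directions separately, the constant Clifford factors $e_0$ and $\uo$ passing through unchanged.

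For (iv), (vi), (viii), (x) I would avoid computation and use homogeneity: since $\mE$ is the Euler operator on $\mR^{m+1}$, any operator $T$ homogeneous of degree $d$ satisfies $[\mE+1,T] = dT$, and $D_0$, $\bfx$, $\partial_{x_0}^2 + \pR^2$, $|\bfx|^2$ have degrees $-1$, $+1$, $-2$, $+2$ respectively, which are exactly the four stated relations. To finish, I would observe that by (viii)--(x) the even elements $|\bfx|^2$, $\partial_{x_0}^2 + \pR^2$ and $\mE+1$ satisfy the bracket relations of $\mathfrak{sl}(2)$ (after an obvious rescaling, with $\mE+1$ as Cartan element), by (iv)--(vii) the odd elements $\bfx$ and $D_0$ span a two-dimensional module over it, and by (i)--(iii) their anticommutators reproduce the even part; this is precisely the standard presentation of $\mathfrak{osp}(1|2)$ familiar from Clifford analysis. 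The only real demand of the proof is bookkeeping --- consistently treating $\uo$ as an angular Clifford-valued coefficient that anticommutes with $e_0$ but commutes with $r$ and $x_0$ --- so I expect the main obstacle to be organisational rather than mathematical.
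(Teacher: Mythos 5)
Your proposal is correct and follows essentially the same route as the paper: both pass to the polar form $\bfx = x_0e_0 + r\uo$, $D_0 = e_0\partial_{x_0}+\uo\pR$ and verify the relations by direct computation using $e_0\uo+\uo e_0=0$ and the fact that $\uo$ commutes with $\partial_{x_0}$ and $\pR$ (the paper writes out only (iii), (iv), (vi), in the slightly more general $(p,q)$-setting). Your homogeneity argument for (iv), (vi), (viii), (x) --- that $[\mE+1,T]=dT$ for $T$ of homogeneity degree $d$ --- is a valid and cleaner substitute for the paper's direct computation of those commutators.
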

\begin{proof}
The $\mathfrak{osp}(1\vert2)$-relations can be proven in the slightly more general context of the operators
\begin{alignat*}{2}
\bfx & = \sum_{i=1}^{p}e_i x_i + \sum_{i=p+1}^{p+q}e_i x_i && = \bfx_p + \mathbf{\underline{x}}_q\\
\mE & = \sum_{i=1}^{p}x_i \pI + \sum_{i=p+1}^{p+q}x_i \pI && = \mE_p + \mE_q \\
\uD & = \sum_{i=1}^pe_i\pI + \frac{\mathbf{\ux}_q}{\lvert \mathbf{\ux}_q \lvert^2}\mE_q,&&
\end{alignat*}
with $p, q \in \mathbb{N}$. The case $(p,q)=(1,m)$ corresponds to the statement of the theorem.
The proof is facilitated by performing a radial coordinate transformation on $\bfx_q$ so it can be written as $r\uo$ and the associated partial derivative $\frac{\mathbf{\ux}_q}{\lvert \mathbf{\ux}_q \lvert^2}\mE_q$ equals $\uo \partial_r$.
Here we only prove the relations that are used further on in this paper:
\begin{enumerate} \renewcommand{\labelenumi}{(\roman{enumi})}
\setcounter{enumi}{2}
\item $\begin{aligned}[t]
\{\bfx,\uD\} &= \bfx \uD + \uD \bfx \\
&= \displaystyle 2 \bfx_p \sum_{i=1}^p e_i \pI -2r\pR + 2 \bfx_q \sum_{i=1}^p e_i \pI - 2 r\pR -p-1\\
&= -2 \left( \mE + \frac{p+1}{2}\right),
\end{aligned}$
\item $\begin{aligned}[t]
&\left[ \mE + \frac{p+1}{2},\uD \right] = \mE\uD + \uD \mE\\
&=\displaystyle \mE_p \sum_{i=1}^p e_i \pI - \sum_{i=1}^p e_i \pI + r\pR \uo \pR + \sum_{i=1}^p \sum_{j=1}^p e_0 x_j \pI \partial_{x_j} - \uo\pR -\uo r\pR^2\\
&=-\uD,
\end{aligned}$
\setcounter{enumi}{5}
\item $\begin{aligned}[t]
\left[ \mE , \bfx \right] &= \left[ \sum_{i=1}^p x_i \pI , \bfx_p \right] + \left[ r\pR , \uo r \right] \\
&= \bfx_p(1 + \mE_p) - \bfx_p \mE_p+ \uo r(1+r \pR)-\uo r^2 \pR\\
& = \bfx.
\end{aligned}$
\end{enumerate}
This proves relations (iii), (iv) and (vi) of the theorem. The others are obtained similarly.
\end{proof}
We end this section with a closer look at the action of the differential operator $D_0$ and the Euler operator $\mE$ on $\bfx^\ell,\ \ell \in \mathbb{N}\setminus\{0\}$. The resulting relations will be used later on.
\begin{lemma} \label{Drelaties} One has, with $s \in \mathbb{N}\setminus\{0\}$, the following operator identities:
\begin{align*}
D_0 \bfx^{2s} &=[-2s \bfx^{2s-1} +\bfx^{2s} D_0]\\
D_0 \bfx^{2s+1} &= [-2\bfx^{2s} (s+\mE+1) - \bfx^{2s+1} D_0]
\intertext{and}
\mE \bfx^{2s} &= \bfx^{2s} [2s  + \mE]\\
\mE \bfx^{2s+1} &= \bfx^{2s+1} [2s+1 + \mE].
\end{align*}
\end{lemma}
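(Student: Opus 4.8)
The plan is to establish all four identities by induction on $s$, using the Lie-superalgebra relations from Theorem~\ref{theorem1} — specifically relation (vi), $[\mE+1,\bfx]=\bfx$ (equivalently $\mE\bfx = \bfx(\mE+1)$), and relations (iii) and (iv), which together control how $D_0$ moves past $\bfx$. The two Euler-operator identities are the easy part and should be disposed of first: from (vi) one has $\mE\bfx^\ell = \bfx^\ell(\ell+\mE)$ for every $\ell\in\mathbb{N}$ by a trivial induction (push $\mE$ past one factor of $\bfx$ at a time, each step contributing $+1$), and then the cases $\ell=2s$ and $\ell=2s+1$ are just specializations. So the real content is the two $D_0$ identities.

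For the $D_0$ relations, I would first derive the single-factor commutation rule $D_0\bfx = -\bfx D_0 - 2(\mE+1)$, which is exactly relation (iii) rewritten (since $\{\bfx,D_0\}=\bfx D_0+D_0\bfx=-2(\mE+1)$). Squaring up: $D_0\bfx^2 = (D_0\bfx)\bfx = (-\bfx D_0 - 2(\mE+1))\bfx = -\bfx D_0\bfx - 2(\mE+1)\bfx$. Now substitute $D_0\bfx = -\bfx D_0 - 2(\mE+1)$ again in the first term and use relation (vi) in the form $(\mE+1)\bfx = \bfx(\mE+2) = \bfx(\mE+1)+\bfx$ in the second term; the terms involving $(\mE+1)$ should cancel in pairs, leaving $D_0\bfx^2 = \bfx^2 D_0 - 2\bfx$, which is the $s=1$ case of the first identity. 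This computation is the prototype for the induction.

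Then I would run the induction carrying both statements simultaneously (they feed into each other): assuming the formula for $D_0\bfx^{2s}$, write $D_0\bfx^{2s+1} = (D_0\bfx^{2s})\bfx = (-2s\bfx^{2s-1}+\bfx^{2s}D_0)\bfx$, expand using $D_0\bfx = -\bfx D_0 - 2(\mE+1)$ and then commute the resulting $(\mE+1)$ past $\bfx^{2s}$ using the Euler identity already proved; collecting terms should give $D_0\bfx^{2s+1} = -2\bfx^{2s}(s+\mE+1) - \bfx^{2s+1}D_0$. Similarly $D_0\bfx^{2(s+1)} = (D_0\bfx^{2s+1})\bfx$, expand using the just-obtained odd formula plus (iii) and (vi), and verify the $(\mE+1)$-terms cancel to leave $-2(s+1)\bfx^{2s+1}+\bfx^{2s+2}D_0$.

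The main obstacle is purely bookkeeping: each step produces several terms of the form $\bfx^k(\mE+1)\bfx^{\ell}$, and one must consistently normal-order them (all powers of $\bfx$ on the left, $\mE$ and $D_0$ on the right) via the Euler identity before the cancellations become visible; it is easy to drop a sign or misplace a shift $\mE \mapsto \mE+1$. A cleaner packaging, which I would use to keep the induction transparent, is to note that the sign alternation $(\pm)$ in $D_0\bfx^\ell = (-1)^\ell \bfx^\ell D_0 + (\text{lower order})$ is forced by $\{\bfx,D_0\}$ being a pure differential operator of degree zero in $\bfx$, so only the lower-order coefficient needs to be tracked; induction on that single coefficient, using $(\mE+1)\bfx=\bfx(\mE+2)$ at each step, yields the stated closed forms with the coefficients $-2s$ and $-2(s+\mE+1)$ respectively. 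No step requires anything beyond Theorem~\ref{theorem1}, so the proof is elementary once the normal-ordering discipline is fixed.
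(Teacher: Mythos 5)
Your proposal is correct and follows essentially the same route as the paper: both arguments reduce everything to the single-step relations $D_0\bfx=-\bfx D_0-2(\mE+1)$ (relation (iii)) and $\mE\bfx=\bfx(\mE+1)$ (relation (vi)) and then iterate over the power of $\bfx$, the paper phrasing the iteration as ``repeating this procedure $s$ times'' where you phrase it as an explicit induction. The computations and cancellations you describe are exactly those carried out in the paper's proof.
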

\begin{proof}
Using the $\mathfrak{osp}(1|2)$-relations in Theorem \ref{theorem1}, we find
\begin{align*}
D_0 \bfx^{2s} &= (-2\mE-2 -\bfx D_0) x^{2s-1}\\
&= \left(-2 (\bfx+\bfx\mE) -2\bfx -\bfx(-2\mE -2 -\bfx D_0) \right) \bfx^{2s-2}\\
&= (-2\bfx + \bfx^2 D_0)\bfx^{2s-2}.
\end{align*}
Repeating this procedure $s$ times, we get
\begin{align*}
D_0 \bfx^{2s}=[-2s \bfx^{2s-1} +\bfx^{2s} D_0]
\end{align*}
for even powers of $\bfx$ and thus
\begin{align*}
D_0 \bfx^{2s+1} &= [-2s \bfx^{2s-1} +\bfx^{2s} D_0] \bfx\\
&= [-2s \bfx^{2s} + \bfx^{2s} (-\bfx D_0 -2 \mE -2) ] \\
&=[-2\bfx^{2s} (s+\mE+1) - \bfx^{2s+1} D_0]
\end{align*}
for odd powers of $\bfx$. Similar calculations can be done for the Euler operator $\mE$:
\begin{align*}
\mE \bfx^{2s} &= (\bfx+\bfx\mE) \bfx^{2s-1} \\
&= \left(\bfx^2 + \bfx(\bfx+\bfx\mE)\right) \bfx^{2s-2} \\
&= \bfx^2 (2+\mE)\bfx^{2s-2}.
\end{align*}
Repeating this procedure $s$ times, we get
\begin{align*}
\mE \bfx^{2s} = \bfx^{2s} [2s  + \mE]
\end{align*}
for even powers of $\bfx$ and
\begin{align*}
\mE \bfx^{2s+1} &= \bfx^{2s} [2s + \mE] \bfx \\
&= \bfx^{2s} [2s \bfx +  (\bfx+\bfx\mE)] \\
&= \bfx^{2s+1} [2s+1 + \mE]
\end{align*}
for odd powers of $\bfx$. 
\end{proof}
%
%
%
\section{Hilbert module}
\label{hilbert}
\setcounter{equation}{0}
The $\mathfrak{osp}(1|2)$-relations allow us to construct a Hilbert module $H$ corresponding to the differential operator $D_0$. We define $H$ as the right $Cl_{m+1}$-module over a weighted $L_2$ function space, where the weight $h$ is still to be determined.
The associated inner product $\langle\ .\ ,\ .\ \rangle : H \times H \rightarrow Cl_{m+1}$ can be written as $\langle f,g \rangle = \int_{\mR^{m+1}}\overline{f}\ g\ h\ \mathrm{d}\bfx$ with $\overline{f}$ the conjugate of the function $f$. In $Cl_{m+1}$, this conjugation is completely defined by its action on the basis vectors:
\begin{align*}
\overline{e_i e_j}&=\overline{e_j}\ \overline{e_i}\\
\overline{e_i}&=-e_i, \qquad i,j=0,\ldots,m.
\end{align*}
The weight function $h$ will be determined by the condition that $D_0$ is self-adjoint with respect to this inner product. We thus demand that
\begin{equation}
\langle D_0 f, g\rangle = \langle f, D_0 g\rangle \label{condition}
\end{equation}
for every $f,g \in \{f: \mR^{m+1} \rightarrow Cl_{m+1} \}$ of suitable decay such that the resulting integrals exist. The left-hand side of condition \eqref{condition} gives
\begin{align} \label{laatsteterm}
&\int_{\mR^{m+1}} \overline{\left( e_0 \partial_{x_0} f + \frac{\ux}{r^2} \sum_{i=1}^m x_i\pI f \right)} \ g\ h\ \mathrm{d}\bfx \nonumber \\
&= - \int_{\mR^m}\left[\overline{f} e_0 g h \right]_{-\infty}^{+\infty} \mathrm{d}\ux + \int_{\mR^{m+1}} \overline{f} e_0  \partial_{x_0}(gh)\ \mathrm{d}\bfx - \sum_{i=1}^m \sum_{j=1}^m \int_{\mR^{m+1}} (\pI \overline{f}) \frac{x_i x_j}{r^2} e_jg h\ \mathrm{d}\bfx \nonumber \\
&= \int_{\mR^{m+1}} \overline{f} e_0  \partial_{x_0}(gh)\ \mathrm{d}\bfx - \sum_{i=1}^m \sum_{j=1}^m \int_{\mR^{m+1}} (\pI \overline{f}) \frac{x_i x_j}{r^2} e_jg h\ \mathrm{d}\bfx.
\end{align}
By partial integration, the terms in the summation can be written as
\begin{align*}
\int_{\mR^{m+1}} (\pI\overline{f}) \frac{x_j x_i}{r^2} e_jg h\mathrm{d}\bfx &= \int_{\mR^{m+1}} \pI \left(\overline{f} \frac{x_j x_i}{r^2} e_j gh \right) \mathrm{d}\bfx- I_{ij}
\end{align*}
with 
\begin{align*}
I_{ij}=\int_{\mR^{m+1}} \overline{f} \pI\left( \frac{x_j x_i}{r^2} e_jg h \right)\mathrm{d}\bfx.
\end{align*}
Because
\begin{equation*}
\pI\left(\frac{x_j x_i}{r^2} g h\right) = \frac{x_j}{r^2} g h + \delta_{ij} \frac{x_i}{r^2} g h - \frac{2x_jx_i^2}{r^4} g h + \frac{x_j x_i}{r^2} \pI (g h), 
\end{equation*}
we find
\begin{align*}
\sum_{i,j=1}^m I_{ij} =& (1- m) \int_{\mR^{m+1}} \overline{f} \frac{\ux}{r^2} g h \mathrm{d}\bfx - \int_{\mR^{m+1}} \overline{f} \frac{1}{r^2} \ux \sum_{i=1}^n x_i\pI (gh) \mathrm{d}\bfx.
\end{align*}
Identifying the right-hand side of \eqref{condition},
\begin{align*}
&\int_{\mR^{m+1}} \overline{f} \left( \partial_{x_0} g e_0 + \frac{\ux}{r^2} \sum_{i=1}^m x_i\pI g \right) \ h\ \mathrm{d}\bfx,
\end{align*}
with expression \eqref{laatsteterm} and writing $\mE_m=\sum_{i=1}^m x_i\pI$, we get
\begin{align}
\int_{\mR^{m+1}} \overline{f} e_0 g (\partial_{x_0} h)\mathrm{d}\bfx - (1-m) \int_{\mR^{m+1}} \overline{f}\frac{\ux}{r^2}  g h \mathrm{d}\bfx + \int_{\mR^{m+1}} \overline{f} \frac{\ux}{r^2} g \mE_m h \mathrm{d}\bfx = 0. \label{eldjkqf}
\end{align}
This must hold for all Clifford-valued functions $f$ and $g$ in the Hilbert module $H$. Therefore, substituting $e_0f$ for $f$ and $e_0g$ for $g$ one obtains
\begin{align*}
\int_{\mR^{m+1}} \overline{f} e_0 g (\partial_{x_0} h)\mathrm{d}\bfx + (1-m) \int_{\mR^{m+1}} \overline{f}\frac{\ux}{r^2}  g h \mathrm{d}\bfx - \int_{\mR^{m+1}} \overline{f} \frac{\ux}{r^2} g \mE_m h \mathrm{d}\bfx = 0,
\end{align*}
again for all Clifford-valued functions $f$ and $g$ in the Hilbert module $H$. Because the sum of these two expressions reads $\int_{\mR^{m+1}} \overline{f} e_0 g (\partial_{x_0} h)\mathrm{d}\bfx=0$, $\partial_{x_0}h$ has to be identically zero. We may thus conclude that $h$ is independent of $x_0$. Equation  \eqref{eldjkqf} simplifies to
\begin{align*}
\int_{\mR^{m+1}} \overline{f}\frac{\ux}{r^2}  g \left[ (m-1) + \mE_m \right] h \mathrm{d}\bfx=0
\end{align*}
for all Clifford-valued functions $f$ and $g$ in $H$, so there must hold that $\mE_m h = (1-m) h = r\partial_r h(r,\uo)$. 
Therefore $h(r,\uo)=r^{1-m}h_2(\uo)$ for some function $h_2(\uo)$. There are no conditions on $h_2$ so we can put $h_2(\uo)=1$. 
The specific form of the weight function $h(r)=r^{1-m}$ simplifies the calculation of the inner product thoroughly: because $\mathrm{d}\bfx = \mathrm{d}x_0 r^{m-1} \mathrm{d}r \mathrm{d} \uo$, one gets
\begin{equation*}
\langle f,g \rangle = \int_{\mR^{m+1}}\overline{f}\ g\ \mathrm{d}x_0 \mathrm{d}r \mathrm{d} \uo,
\end{equation*}
which can be seen as a cartesian integration in the coordinate $x_0$ as well as in the spherical coordinate $r$. The Hilbert module associated with $D_0$ is thus given by
\begin{align*}
\mathcal{L}_2 &= L_{2}(\mR^{m+1},r^{1-m}\mathrm{d}\bfx)\ \otimes\ Cl_{m+1}\\
&=\left\{ f : \mR^{m+1} \rightarrow Cl_{m+1}\ \vline\ \left[ \int_{\mR^{m+1}} \overline{f(\bfx)} f(\bfx)\ r^{1-m}\ \mathrm{d}\bfx \right]_0 < +\infty  \right\},
\end{align*}
where the notation $[\ldots]_0$ denotes the scalar part of the expression between the brackets. We have hence obtained the following result.
\begin{proposition}\label{innerprod}
The inner product $\langle f,g \rangle = \int_{\mR^{m+1}}\overline{f}g\ \mathrm{d}x_0 \mathrm{d}r \mathrm{d} \uo$ on the right $Cl_{m+1}$-module $\mathcal{L}_2$ exhibits the relations
\begin{align*}
\langle D_0 f, g\rangle &= \langle f, D_0 g\rangle,\\
\langle \bfx f, g\rangle &= - \langle f, \bfx g\rangle
\end{align*} on a dense subset of $\mathcal{L}_2$.
\end{proposition}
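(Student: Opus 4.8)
The plan is to dispatch the two identities separately, since the first is the analytic statement that motivated the choice of weight, while the second is purely algebraic.

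For the self-adjointness of $D_0$ I would work directly in the coordinates $(x_0,r,\uo)$, in which the inner product carries the flat measure $\mathrm{d}x_0\,\mathrm{d}r\,\mathrm{d}\uo$ and $D_0$ takes the form $D_0 = e_0\partial_{x_0} + \uo\,\partial_r$, with the Clifford elements $e_0$ and $\uo = \sum_{i=1}^m \omega_i e_i$ independent of $x_0$ and $r$. Since Clifford conjugation reverses products and sends each $e_i$ to $-e_i$, one has $\overline{e_0} = -e_0$ and $\overline{\uo} = -\uo$, hence $\overline{D_0 f} = -(\partial_{x_0}\overline{f})\,e_0 - (\partial_r\overline{f})\,\uo$. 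Substituting this into $\langle D_0 f, g\rangle$ and integrating by parts once in $x_0$ and once in $r$ — legitimate on a dense subset such as the smooth, compactly supported functions — transfers the derivatives onto $g$ and recovers $\int \overline{f}\,[\,e_0\partial_{x_0}g + \uo\,\partial_r g\,]\,\mathrm{d}x_0\,\mathrm{d}r\,\mathrm{d}\uo = \langle f, D_0 g\rangle$. This is essentially a reversible repackaging of the computation preceding the proposition, which was precisely the chain of equalities that forced $h = r^{1-m}$.

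The step I would watch most carefully is the boundary contribution at $r=0$ arising from the $r$-integration by parts, namely a term of the shape $\int_{\mR}\int_{\mathbb{S}^{m-1}} \overline{f(x_0,0)}\,\uo\,g(x_0,0)\,\mathrm{d}x_0\,\mathrm{d}\uo$. For functions smooth across the origin the traces $f(x_0,0)$ and $g(x_0,0)$ do not depend on $\uo$, so they factor out of the angular integral and the term vanishes because $\int_{\mathbb{S}^{m-1}}\uo\,\mathrm{d}\uo = 0$; alternatively one simply restricts to the dense subset of functions supported away from the singular set $r=0$, which also sidesteps any irregularity of the coefficient $\ux/r^2$. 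The boundary terms as $|x_0|, r \to \infty$ vanish by compact support (or sufficient decay). I regard this boundary analysis as the only point with any content; everything else is routine.

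For the second identity no integration by parts is needed. Writing $\bfx = \sum_{i=0}^m x_i e_i$ with real coefficients $x_i$, the anti-automorphism property gives $\overline{\bfx} = \sum_i x_i\overline{e_i} = -\bfx$, whence $\overline{\bfx f} = \overline{f}\,\overline{\bfx} = -\overline{f}\,\bfx$. Plugging into the integral, $\langle \bfx f, g\rangle = \int (-\overline{f}\,\bfx)\,g\,\mathrm{d}x_0\,\mathrm{d}r\,\mathrm{d}\uo = -\langle f, \bfx g\rangle$, valid on the dense subset where $\bfx f$ and $\bfx g$ still lie in $\mathcal{L}_2$. The only thing to keep straight here is the right-module bookkeeping and the left placement of $\bfx$ inside $\overline{f}(\cdot)g$; beyond that there is no genuine obstacle.
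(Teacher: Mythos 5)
Your proof is correct, and it takes a somewhat different (and in fact cleaner) route than the paper. The paper never verifies the proposition directly: it runs the argument in the opposite direction, integrating by parts in the Cartesian coordinates $(x_0,x_1,\dots,x_m)$ against an unknown weight $h$, with $D_0$ in the form $e_0\partial_{x_0}+\frac{\ux}{r^2}\sum_i x_i\pI$ (hence the double sums and the terms $I_{ij}$), and then \emph{solves} for $h=r^{1-m}$; the proposition is stated as a summary of that derivation. You instead pass to the coordinates $(x_0,r,\uo)$, where the measure is flat and $D_0=e_0\partial_{x_0}+\uo\partial_r$ has constant Clifford coefficients along the directions being integrated, so self-adjointness reduces to two one-dimensional integrations by parts. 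What your version buys is twofold: it avoids the singular coefficient $\ux/r^2$ entirely, and it forces you to confront the boundary term at $r=0$, which the paper's Cartesian computation silently discards. Your treatment of that term is sound: for $f,g$ smooth across $\ux=0$ the traces at $r=0$ are independent of $\uo$, so they pull out on the left and right of the angular integral (order matters in $Cl_{m+1}$, but constants in $\uo$ commute with the integration, not with $\uo$ itself) and $\int_{\mS^{m-1}}\uo\,\mathrm{d}\uo=0$ finishes it; restricting to functions supported away from $r=0$ is an equally acceptable dense subset. The anti-self-adjointness of $\bfx$ is not proved in the paper at all, and your one-line argument from $\overline{\bfx}=-\bfx$ and the anti-automorphism property of conjugation is exactly what is needed. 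One small caveat: $\langle f,g\rangle$ is $Cl_{m+1}$-valued here, so ``integration by parts'' must be read componentwise in the real coordinates of the Clifford-algebra-valued integrand, but since the Clifford coefficients $e_0$ and $\uo$ do not depend on the variables $x_0$ and $r$ being integrated, this causes no difficulty.
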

\section{Polynomials in the kernel of $D_0$}
\setcounter{equation}{0}
\label{poly}
Given the differential operator $D_0$ and the above defined inner product, it should be possible to construct a Clifford analogue to the classical Hermite functions. However, before doing so we have to determine the kernel of $D_0$ on which these Clifford-Hermite functions will be based.
\begin{proposition}
The homogeneous polynomials
\begin{equation*}
m_k(\bfx)=\left(e_0-1\right) (x_0+\ux)^k \mathbf{a}
\end{equation*}
of degree $k \in \mathbb{N}$ with $\mathbf{a} \in Cl_{m+1}$ vanish under the action of $D_0$.
\end{proposition}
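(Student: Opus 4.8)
The plan is to show directly that $D_0 m_k(\bfx) = 0$ by computing the action of $D_0 = e_0\partial_{x_0} + \frac{\ux}{r^2}\mE_m$ on the function $(x_0 + \ux)^k$, where here $\ux = r\uo$ with $\uo^2 = -1$. The key structural observation is that $x_0 + \ux$ behaves like a ``complex variable'' with imaginary unit $\uo$: indeed, powers $(x_0+\ux)^k$ can be expanded as $\sum_{j} \binom{k}{j} x_0^{k-j} \ux^j$, and since $\ux$ commutes with $x_0$ and with $\uo$, differentiation is manageable.

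First I would compute $e_0 \partial_{x_0}(x_0+\ux)^k = e_0 \, k (x_0+\ux)^{k-1}$, using that $\partial_{x_0}$ acts only on the scalar $x_0$. Second, I would compute $\mE_m (x_0+\ux)^k$: since $\mE_m = \sum_i x_i\partial_{x_i} = r\partial_r$ measures homogeneity in the $\ux$-variables, and $(x_0+\ux)^k = \sum_j \binom{k}{j}x_0^{k-j}\ux^j$ has its $j$-th term homogeneous of degree $j$ in $\ux$, we get $\mE_m (x_0+\ux)^k = \sum_j j \binom{k}{j} x_0^{k-j}\ux^j$. Then $\frac{\ux}{r^2}\mE_m (x_0+\ux)^k = \frac{\ux}{r^2}\sum_j j\binom{k}{j}x_0^{k-j}\ux^j$; using $\ux^2 = -r^2$, the factor $\frac{\ux}{r^2}\ux^j$ simplifies — for $j$ odd, $\ux^{j+1} = \ux \cdot \ux^j$ gives $-r^2 \ux^{j-1}$, so $\frac{\ux}{r^2}\ux^j = -\ux^{j-1}$, and similarly for $j$ even. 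In all cases $\frac{\ux}{r^2}\ux^{j} = -\ux^{j-1}$ (valid for $j\geq 1$; the $j=0$ term is killed by the factor $j$). Hence $\frac{\ux}{r^2}\mE_m(x_0+\ux)^k = -\sum_{j\geq 1} j\binom{k}{j} x_0^{k-j}\ux^{j-1}$, which one recognizes as $-\partial_{\ux\text{-part}}$ of the binomial expansion, i.e. it equals $-k(x_0+\ux)^{k-1}$ after re-indexing (this is the derivative of $(x_0+\ux)^k$ ``in the $\ux$-direction'').

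Combining, $D_0 (x_0+\ux)^k = e_0 k(x_0+\ux)^{k-1} - k(x_0+\ux)^{k-1} = (e_0 - 1) k (x_0+\ux)^{k-1}$. Now the crucial point: $(e_0 - 1)$ is a left-annihilating factor for $(e_0-1)$, since $(e_0-1)(e_0-1) = e_0^2 - 2e_0 + 1 = -1 - 2e_0 + 1 = -2e_0$... so that is \emph{not} zero directly. Instead I would observe that $D_0$ acts by left multiplication by $e_0$ and by $\frac{\ux}{r^2}$, and check that $D_0\big[(e_0-1)F(\bfx)\big] = (\text{something})$; more precisely, since $D_0 = e_0\partial_{x_0} + \frac{\ux}{r^2}\mE_m$ and the operator parts $\partial_{x_0}, \mE_m$ are scalar (commute past $(e_0-1)$ and past $\mathbf{a}$), we have $D_0\big[(e_0-1)(x_0+\ux)^k\mathbf{a}\big] = e_0(e_0-1)\,k(x_0+\ux)^{k-1}\mathbf{a} + \frac{\ux}{r^2}(e_0-1)\mathE_m\big[(x_0+\ux)^k\big]\mathbf{a}$. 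Here the obstacle — and the place requiring care — is that $\ux = \sum_{i=1}^m x_i e_i$ does \emph{not} commute with $e_0$ in $Cl_{m+1}$; rather $e_0 e_i = -e_i e_0$, so $e_0 \ux = -\ux e_0$, hence $e_0(e_0-1) = -1 - e_0$ while $\frac{\ux}{r^2}(e_0 - 1) = \frac{\ux}{r^2}e_0 - \frac{\ux}{r^2} = -e_0\frac{\ux}{r^2} - \frac{\ux}{r^2} = -(e_0+1)\frac{\ux}{r^2}$. Therefore $D_0\big[(e_0-1)(x_0+\ux)^k\mathbf{a}\big] = -(e_0+1)\big[k(x_0+\ux)^{k-1} + \frac{\ux}{r^2}\mE_m(x_0+\ux)^k\big]\mathbf{a}$, and the bracket is exactly $k(x_0+\ux)^{k-1} - k(x_0+\ux)^{k-1} = 0$ by the computation above.

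So the main technical work is the second step — establishing $\frac{\ux}{r^2}\mE_m(x_0+\ux)^k = -k(x_0+\ux)^{k-1}$ — which I would do either via the binomial expansion and the identity $\ux^{j+1} = -r^2\ux^{j-1}$ as sketched, or more slickly by noting that on functions of the single ``slice variable'' $x_0 + r\uo$ (with $\uo$ fixed), $\mE_m = r\partial_r$ and $\frac{\ux}{r^2}\mE_m = \frac{\uo}{r}\cdot r\partial_r = \uo\partial_r$, so $D_0$ restricted to such functions is $e_0\partial_{x_0} + \uo\partial_r$, which is precisely the slice Cauchy–Riemann-type operator for which $x_0 + r\uo$ is the natural holomorphic variable; then $D_0(x_0+r\uo)^k = (e_0 + \uo\cdot\uo)k(x_0+r\uo)^{k-1}\cdot$... one must be careful that $\uo(x_0+r\uo)^{k-1} = (x_0 + r\uo)^{k-1}\uo$ since $\uo$ commutes with $x_0+r\uo$, and $e_0\partial_{x_0}$ produces a left $e_0$ while $\uo\partial_r$ produces a left $\uo$, giving $\big(e_0 + \uo \cdot (\text{from }\partial_r r\uo = \uo)\big)$. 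The cleanest writeup tracks the left-multiplication structure carefully and ends with the factor $(e_0+1)\cdot 0 = 0$ as above; the only genuine subtlety is the anticommutation $e_0\ux = -\ux e_0$, which flips $(e_0-1)$ to $(e_0+1)$ when pulled through.
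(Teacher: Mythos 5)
Your proof is correct and follows the same route as the paper, whose proof simply states that a direct calculation of $D_0 m_k(\bfx)$ gives zero; you have carried that calculation out in full, correctly handling the one genuine subtlety, namely that the anticommutation $e_0\ux=-\ux e_0$ turns the left factor $(e_0-1)$ into $-(e_0+1)$ when $D_0$ is pulled through. The key identity $\frac{\ux}{r^2}\mE_m(x_0+\ux)^k=-k(x_0+\ux)^{k-1}$, obtained from the binomial expansion and $\ux^2=-r^2$, is also established correctly.
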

\begin{proof}
A straightforward calculation of $D_0 m_k(\bfx)$ gives zero.
\end{proof}
Given that these polynomials $m_k$ are in the kernel of $D_0$, the question naturally arises whether they yield all polynomial null-solutions of $D_0$.
A general Clifford-valued polynomial $f: \mR^{m+1} \rightarrow Cl_{m+1}: \bfx \mapsto f(\bfx)$ can be decomposed into a Taylor series in the coordinate $x_0$. The coefficients in this decomposition are polynomials $p: \mR^m \rightarrow Cl_{m+1}$. Because $D_0$ maps $k$-homogeneous polynomials to homogeneous polynomials of degree $k-1$, we will seek its null-solutions within the space of $k-$homogeneous polynomials $\cP_k(\mR^m, Cl_{m+1})$.
The $k$-homogeneous part of $f$ is given by
\begin{align*}
f_k(\bfx)&=\sum_{i=0}^{k} x_0^i p_{k-i}(\ux)\\
&=\sum_{i=0}^{k} x_0^i r^{k-i} p_{k-i}(\uo).
\end{align*}
with $p_{k-i}:\mR^m \rightarrow Cl_{m+1}$ and $\mE p_{k-i} = (k-i) p_{k-i}$. For the function $f_k$ to be in the kernel of $D_0$, we demand that
\begin{align*}
D_0 f_k(\bfx)=e_0 \sum_{i=0}^{k-1} (i+1) x_0^i r^{k-i-1} p_{k-i-1}(\uo) + \uo \sum_{i=0}^{k-1} x_0^i (k-i) r^{k-i-1} p_{k-i}(\uo) = 0
\end{align*}
and therefore
\begin{align*}
p_{k-i}(\uo) &= \uo e_0 \frac{(i+1)}{(k-i)} p_{k-i-1}(\uo)\\
&=\frac{(i+1)}{(k-i)} \frac{(i+2)}{(k-i-1)} \frac{\ldots}{\ldots} \frac{k}{1} (\uo e_0)^{k-i} p_{0}(\uo)\\
&=\binom{k}{i} (\uo e_0)^{k-i} p_{0}(\uo)
\end{align*}
for $i\in \{0,\ldots,k-1\}$ so
\begin{align*}
f_k(\bfx)=\sum_{i=0}^{k} \binom{k}{i} x_0^i r^{k-i} (\uo e_0)^{k-i} p_{0}(\uo)=(x_0+\ux e_0)^k p_{0}(\uo).
\end{align*}
The polynomial $p_{0} \in \mathcal{P}_0(\mR^m, Cl_{m+1})$ is a constant so it can be written as $p_{0}=(e_0-1)\mathbf{a}$ with $\mathbf{a}\in Cl_{m+1}$. The general expression for  a monogenic of degree $k$ thus reads 
\begin{align*}
f_k(\bfx)&=(x_0+\ux e_0)^k (e_0-1)\mathbf{a}\\
&=(e_0-1) (x_0+\ux)^k \mathbf{a}
\end{align*}
and we have proven the following theorem.
\begin{theorem}
The set of $k$-homogeneous polynomials $(k \in \mathbb{N})$ in the kernel of $D_0$ is one-dimensional and is given by the polynomials
\begin{equation*}
m_k(\bfx)=\left(e_0-1\right) (x_0+\ux)^k\ \mathbf{a}, \qquad \mathbf{a} \in Cl_{m+1}.
\end{equation*}
\end{theorem}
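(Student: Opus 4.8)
The plan is to determine all $k$-homogeneous polynomial null-solutions of $D_0$ directly, by expanding in powers of the distinguished coordinate $x_0$ and reducing the equation $D_0 f=0$ to a first-order recursion. Since $D_0$ sends homogeneous polynomials of degree $k$ to homogeneous polynomials of degree $k-1$, it suffices to search inside $\cP_k(\mR^{m+1},Cl_{m+1})$. I would write a candidate as $f_k(\bfx)=\sum_{i=0}^{k}x_0^i\,p_{k-i}(\ux)$ with each $p_j$ a $j$-homogeneous $Cl_{m+1}$-valued polynomial in $\ux$, so that $p_j(\ux)=r^j p_j(\uo)$.

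Applying $D_0=e_0\partial_{x_0}+\frac{\ux}{r^2}\mE_m$ term by term, the operator $e_0\partial_{x_0}$ lowers the $x_0$-index, while $\frac{\ux}{r^2}\mE_m$ acts on $r^j p_j(\uo)$ as left multiplication by $j\,\uo\,r^{j-1}$. Since distinct powers of $x_0$ are linearly independent, $D_0 f_k=0$ is equivalent to the system
\[
(i+1)\,e_0\,p_{k-1-i}(\uo)+(k-i)\,\uo\, p_{k-i}(\uo)=0,\qquad i=0,\dots,k-1 .
\]
Using $\uo^2=-1$ this becomes the recursion $p_{k-i}(\uo)=\frac{i+1}{k-i}\,\uo e_0\,p_{k-1-i}(\uo)$, which I would unwind to $p_{k-i}(\uo)=\binom{k}{i}(\uo e_0)^{k-i}p_0$, where $p_0\in\cP_0(\mR^m,Cl_{m+1})=Cl_{m+1}$ is an arbitrary constant subject to no further condition. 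Since $r^{k-i}(\uo e_0)^{k-i}=(\ux e_0)^{k-i}$ and $x_0$ is central, the binomial sum collapses to $f_k(\bfx)=(x_0+\ux e_0)^k\,p_0$.

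It remains to match this with $m_k$. Because $e_0$ anticommutes with $e_1,\dots,e_m$, a short computation gives the intertwining identity $(e_0-1)(x_0+\ux)=(x_0+\ux e_0)(e_0-1)$, hence by induction $(e_0-1)(x_0+\ux)^k=(x_0+\ux e_0)^k(e_0-1)$. Moreover $e_0-1$ is invertible in $Cl_{m+1}$, since $(e_0-1)(e_0+1)=-2$, so every constant $p_0$ can be written uniquely as $p_0=(e_0-1)\mathbf{a}$ with $\mathbf{a}\in Cl_{m+1}$. Substituting yields $f_k(\bfx)=(e_0-1)(x_0+\ux)^k\mathbf{a}=m_k(\bfx)$; conversely each $m_k(\bfx)$ is a null-solution by the preceding Proposition. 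Evaluating at $x_0=1$, $\ux=0$ and using the invertibility of $e_0-1$ shows $\mathbf{a}\mapsto m_k(\bfx)$ is injective, so the solution space is a free right $Cl_{m+1}$-module of rank one, which is the ``one-dimensionality'' claimed.

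The step I expect to be most delicate is the middle one: carefully tracking the left factor $\uo$ produced by $\frac{\ux}{r^2}\mE_m$ and keeping the non-commuting quantities $\uo$ and $e_0$ in the correct order, so that the recursion genuinely closes and the binomial re-summation is legitimate. The final identification with $m_k$ is then only an algebraic rearrangement, but it is worth recording separately the two facts that make the two shapes of the answer agree: the intertwining identity and the invertibility of $e_0-1$.
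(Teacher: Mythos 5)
Your proposal is correct and follows essentially the same route as the paper: the Taylor expansion in $x_0$, the first-order recursion $p_{k-i}(\uo)=\frac{i+1}{k-i}\,\uo e_0\,p_{k-i-1}(\uo)$, the binomial re-summation to $(x_0+\ux e_0)^k p_0$, and the rewriting as $(e_0-1)(x_0+\ux)^k\mathbf{a}$. The only difference is that you make explicit two points the paper leaves implicit --- the intertwining identity $(e_0-1)(x_0+\ux)=(x_0+\ux e_0)(e_0-1)$ and the invertibility of $e_0-1$ via $(e_0-1)(e_0+1)=-2$ --- which is a welcome clarification rather than a new approach.
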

%
\section{Clifford-Hermite polynomials}
\setcounter{equation}{0}
\label{clifherm}
In classical Fourier analysis, the eigenvectors of the Fourier operator are given by the Hermite functions. Together with their associated Hermite polynomials they also play an important role in physics, for instance as solutions of the quantum harmonic oscillator. Various generalisations of the Hermite polynomials have been studied in Clifford analysis \cite{de2007hermite,delanghe1992clifford}. Following their approach we may define Hermite polynomials for the slice Dirac operator as 
$h_{j,k}(\bfx)m_k(\bfx)=(\bfx-cD_0)^j m_k(\bfx)$ with $c\in \mathbb{C}$ a complex parameter. Because we don't want to overload notations, the presence of this parameter will not be stressed in the various definitions and notations. The first five Clifford-Hermite polynomials are then
\begin{align}
h_{0,k}(\bfx) &= 1 \nonumber\\
h_{1,k}(\bfx) &= \bfx \nonumber\\
h_{2,k}(\bfx) &= [\bfx^2 + 2c(k+1)] \label{vijf} \\
h_{3,k}(\bfx) &= [\bfx^3 + 2c(k+2)\bfx] \nonumber\\
h_{4,k}(\bfx) &= [\bfx^4 + 4c(k+2)\bfx^2 + 4c^2(k+1)(k+2)].\nonumber
\end{align}
In what follows, the polynomial product of the Clifford-Hermite polynomial $h_{j,k}$ and a $k$-homogeneous monogenic polynomial $m_k$ will be denoted as $H_j (m_k)$, so $H_j (m_k)(\bfx)  = h_{j,k}(\bfx)m_k(\bfx) = (\bfx-cD_0)^j m_k(\bfx)$. 
\begin{proposition}
(recursion formula) For every $j \in \mathbb{N} \setminus\{0\}$ and $c\in \mathbb{C}$ one has
\begin{equation}
H_{j}(m_k)(\bfx) = (\bfx-cD_0) H_{j-1}(m_k)(\bfx). \label{recursion}
\end{equation}
\end{proposition}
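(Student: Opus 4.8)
The plan is to read the recursion straight off the definition. By construction, $H_j(m_k)(\bfx)$ is defined to be $(\bfx-cD_0)^j m_k(\bfx)$, where $(\bfx-cD_0)^j$ denotes the $j$-fold composition of the first-order operator $\bfx-cD_0$ — left multiplication by the $1$-vector $\bfx$ minus $c$ times the slice Dirac operator — acting on $Cl_{m+1}$-valued functions. Since $j\geq 1$, I would split off one factor using associativity of operator composition, $(\bfx-cD_0)^j=(\bfx-cD_0)\circ(\bfx-cD_0)^{j-1}$, apply both sides to $m_k$, and then recognise $(\bfx-cD_0)^{j-1}m_k(\bfx)$ as $H_{j-1}(m_k)(\bfx)$ by the same definition. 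This yields $H_j(m_k)(\bfx)=(\bfx-cD_0)H_{j-1}(m_k)(\bfx)$ immediately.

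The only point worth a word of care is that the symbol $H_{j-1}(m_k)$ is introduced with two a priori different descriptions, the polynomial product $h_{j-1,k}(\bfx)m_k(\bfx)$ and the operator expression $(\bfx-cD_0)^{j-1}m_k(\bfx)$; before applying $\bfx-cD_0$ one should note that, by the very definition of $h_{j-1,k}$, these denote the same $Cl_{m+1}$-valued function, so the action of the first-order operator on it is unambiguous and reproduces $(\bfx-cD_0)^j m_k(\bfx)=H_j(m_k)(\bfx)$. There is thus no genuine obstacle: the proposition merely records operator associativity and exists to keep the later arguments notationally clean.

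If one wanted the recursion phrased purely in terms of the Clifford-valued polynomials, the same one-line argument also yields $h_{j,k}(\bfx)m_k(\bfx)=\bfx\,h_{j-1,k}(\bfx)m_k(\bfx)-c\,D_0\!\left(h_{j-1,k}(\bfx)m_k(\bfx)\right)$, and one would then expand the right-hand side with Lemma \ref{Drelaties} and the $\mathfrak{osp}(1|2)$-relations of Theorem \ref{theorem1} to obtain a genuine three-term recurrence for the $h_{j,k}$ themselves; but for the statement as given this extra computation is not needed.
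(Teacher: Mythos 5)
Your argument is correct and matches the paper's proof, which simply says ``By definition'': the recursion is immediate from $H_j(m_k)=(\bfx-cD_0)^j m_k$ by peeling off one factor of the operator. Your extra remark that $h_{j-1,k}(\bfx)m_k(\bfx)$ and $(\bfx-cD_0)^{j-1}m_k(\bfx)$ denote the same function is a reasonable point of care but not needed beyond what the paper records.
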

\begin{proof}
By definition.
\end{proof}
As is the case with the classical Hermite polynomials, the polynomials $H_j(m_k)$ are solutions of a partial differential equation. To prove this statement, we need the two following lemmata.
\begin{lemma} \label{lemma}
The $\mathfrak{osp}(1\vert2)$-superalgebra exhibits the commutation relation
\begin{equation*}[\mE + \bfx D_0,(\bfx-cD_0)^{2}]=0\end{equation*} where $c\in \mathbb{C}$.
\end{lemma}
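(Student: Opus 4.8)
The plan is to verify the vanishing of the commutator $[\mE+\bfx D_0,(\bfx-cD_0)^2]$ by expanding everything into the $\mathfrak{osp}(1\vert2)$-generators $\bfx$, $D_0$, $\mE$ and the quadratic elements $\bfx^2=-|\bfx|^2$, $D_0^2=-(\partial_{x_0}^2+\pR^2)$, and then repeatedly applying the relations (i)--(x) of Theorem \ref{theorem1}. First I would expand $(\bfx-cD_0)^2 = \bfx^2 - c(\bfx D_0 + D_0\bfx) + c^2 D_0^2 = \bfx^2 - c\{\bfx,D_0\} + c^2 D_0^2$, and use relation (iii) to rewrite the middle term as $2c(\mE+1)$, so that
\begin{equation*}
(\bfx-cD_0)^2 = \bfx^2 + 2c(\mE+1) + c^2 D_0^2 .
\end{equation*}
Since $[\mE+\bfx D_0,\,\cdot\,]$ is linear, the computation splits into three pieces: $[\mE+\bfx D_0,\bfx^2]$, $[\mE+\bfx D_0,\mE+1]$, and $[\mE+\bfx D_0,D_0^2]$, each to be multiplied by the appropriate power of $c$.

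The key observation is that the operator $\cE:=\mE+\bfx D_0$ is (up to a constant shift) the "weighted degree" / grading operator for this $\mathfrak{osp}(1\vert2)$-realisation, and one should expect it to commute with the even Casimir-type combination $(\bfx-cD_0)^2$. Concretely, I would compute the action of $\cE$ on each generator using Theorem \ref{theorem1}: from (vi) one gets $[\mE+1,\bfx]=\bfx$, and combining $D_0\bfx = -\{\bfx,D_0\}... $ more carefully, $[\bfx D_0,\bfx] = \bfx[D_0,\bfx] = \bfx(\{D_0,\bfx\} - 2\bfx D_0) = \bfx(-2(\mE+1)) - 2\bfx^2 D_0$; similarly $[\bfx D_0, D_0] = [\bfx,D_0]D_0 = (-2(\mE+1)-2\bfx D_0)D_0$. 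Assembling these (together with (iv), (vii), (viii), (ix), (x)) should yield the clean eigen-type relations
\begin{equation*}
[\cE,\bfx^2] = \text{(multiple of $\bfx^2$, $(\mE{+}1)$, $D_0^2$ that cancels)}, \quad [\cE,D_0^2]=\cdots,\quad [\cE,\mE+1]=\cdots,
\end{equation*}
and the real content is that when these three contributions are added with weights $1$, $c$, $c^2$ as above, every term cancels. An alternative, slightly slicker route: show directly that $[\cE,\bfx]=\alpha\bfx+\beta\bfx^2 D_0$ and $[\cE,D_0]=\gamma D_0 + \delta(\mE+1)D_0+\cdots$ and then argue that $(\bfx-cD_0)$ is an eigenvector of $\mathrm{ad}_{\cE}$ only in a graded sense — but since $\cE$ mixes $\bfx$ and $D_0$, the cleanest is still to go through the even quadratic elements, which form a closed $\mathfrak{sl}(2)$ under the bracket by (viii),(ix),(x).

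The main obstacle is bookkeeping: $\bfx D_0$ is an odd-times-odd product, so commuting it past $\bfx$ or $D_0$ produces anticommutator terms via relations (iii) and (ii), and one must be careful that $\{\bfx,D_0\}=-2(\mE+1)$ is even while $[\bfx,D_0]=\{\bfx,D_0\}-2\bfx D_0=-2(\mE{+}1)-2\bfx D_0$ reintroduces $\bfx D_0$; iterating this can generate a proliferation of terms, and the cancellation only becomes visible after consistently reducing everything to the ordered monomials $\bfx^2$, $D_0^2$, $\mE$, $\bfx D_0$, $(\mE+1)\bfx D_0$, etc., using (iv)--(x) to move $\mE+1$ to one side. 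Once all three commutators are in this normal form, the identity $1\cdot[\cE,\bfx^2] + c\cdot[\cE,2(\mE+1)] + c^2\cdot[\cE,D_0^2] = 0$ is a routine term-by-term check for every power of $c$ separately, which I would present compactly rather than in full detail.
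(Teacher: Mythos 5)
Your proof strategy is correct, but it takes a genuinely different route from the paper's. The paper never expands $(\bfx-cD_0)^2$ into even elements; instead it establishes the single-factor identity $(\mE+\bfx D_0)(\bfx-cD_0)=-(\bfx-cD_0)(\mE+\bfx D_0+1)$ and applies it twice, so that the minus sign and the shift by $1$ cancel. (That identity is reused verbatim in the proof of Lemma \ref{evenodd}, which is the economy the paper's route buys.) You instead use relation (iii) to write $(\bfx-cD_0)^2=\bfx^2+2c(\mE+1)+c^2D_0^2$ and must then show that $\mE+\bfx D_0$ commutes with each of $\bfx^2$, $\mE+1$ and $D_0^2$ separately --- separately, because these three commutators are independent of $c$, so no cross-cancellation between different powers of $c$ is possible, as your final sentence correctly acknowledges. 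All three do vanish: e.g.\ $[\mE,\bfx^2]=2\bfx^2$ by (x) while $[\bfx D_0,\bfx^2]=\bfx[D_0,\bfx^2]=-2\bfx^2$ by (v), and the $D_0^2$ and $\mE+1$ cases follow the same pattern from (vii), (viii) and from (iv), (vi). This is a valid and arguably more conceptual proof: it exhibits $\mE+\bfx D_0=\tfrac12[\bfx,D_0]-1$ (essentially the Scasimir of the realisation) as central for the entire even subalgebra $\mathfrak{sl}(2)$, not just for the particular combination $(\bfx-cD_0)^2$, at the cost of three small computations instead of one. One slip to correct before writing it out in full: you record $[\bfx,D_0]$ as $-2(\mE+1)-2\bfx D_0$, but that expression is $[D_0,\bfx]$; the correct sign is $[\bfx,D_0]=2(\mE+1)+2\bfx D_0$. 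This does not affect the three-commutator computation, which can be run entirely off relations (iv)--(x) without ever forming $[\bfx,D_0]$, but it would propagate if you carried out the alternative ``slicker route'' exactly as sketched.
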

\begin{proof}
Using the $\mathfrak{osp}(1\vert2)$-relations in Theorem \ref{theorem1}, one obtains the operator identity $(\mE + \bfx D_0) (\bfx - cD_0) = - (\bfx - cD_0) (\mE + \bfx D_0 + 1)$ and thus
\begin{align*}
(\mE + \bfx D_0)(\bfx-cD_0)^{2} &= -(\bfx-cD_0)[ - (\bfx - cD_0) (\mE + \bfx D_0 + 1) + (\bfx-cD_0)]\\
 &=(\bfx-cD_0)^2(\mE+\bfx D_0),\end{align*}
which proves the lemma. \end{proof}
\noindent As a consequence, the following properties hold for the polynomials $H_j(m_k)$.
\begin{lemma} \label{evenodd} One has
\begin{align*}
(\mE + \bfx D_0)H_{j} (m_k)(\bfx) &= B(j,k) H_{j} (m_k)(\bfx)
\end{align*}
with $B(j,k)=k$ if $j=2t$ and $B(j,k)=-(k+1)$ if $j=2t+1$.
\end{lemma}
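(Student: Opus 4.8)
The plan is to prove the eigenvalue statement by induction on $j$, using the recursion formula \eqref{recursion} together with the commutation relation from \lemref{lemma}. First I would establish the two base cases $j=0$ and $j=1$ directly. For $j=0$, we have $H_0(m_k)(\bfx)=m_k(\bfx)$, and since $D_0 m_k = 0$ we get $(\mE+\bfx D_0)H_0(m_k) = \mE m_k = k\, m_k$, as $m_k$ is $k$-homogeneous; this gives $B(0,k)=k$. For $j=1$, we have $H_1(m_k)(\bfx) = \bfx\, m_k(\bfx)$, so using \lemref{Drelaties} (the case $s=0$ read off as $D_0\bfx = -2(\mE+1) - \bfx D_0$) and $\mE\bfx = \bfx(1+\mE)$ one computes $(\mE+\bfx D_0)(\bfx m_k) = \bfx(1+\mE)m_k + \bfx(-2(\mE+1)-\bfx D_0)m_k = \bfx m_k + \bfx\mE m_k - 2\bfx m_k - 2\bfx\mE m_k$ (using $D_0 m_k=0$), which simplifies to $-(k+1)\bfx m_k$ since $\mE m_k = k m_k$; hence $B(1,k) = -(k+1)$.

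Next I would run the inductive step in increments of two. Suppose the claim holds for some $j$ with eigenvalue $B(j,k)$. By the recursion \eqref{recursion}, $H_{j+2}(m_k) = (\bfx - cD_0)^2 H_j(m_k)$. Applying $\mE + \bfx D_0$ and invoking \lemref{lemma}, which says $(\mE+\bfx D_0)$ commutes with $(\bfx-cD_0)^2$, we obtain
\begin{align*}
(\mE + \bfx D_0) H_{j+2}(m_k) &= (\mE+\bfx D_0)(\bfx-cD_0)^2 H_j(m_k)\\
&= (\bfx-cD_0)^2 (\mE+\bfx D_0) H_j(m_k)\\
&= (\bfx-cD_0)^2 B(j,k) H_j(m_k) = B(j,k)\, H_{j+2}(m_k).
\end{align*}
Since $B(j+2,k) = B(j,k)$ (the value depends only on the parity of $j$: $k$ for even, $-(k+1)$ for odd), this closes the induction. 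Starting the even chain from $j=0$ and the odd chain from $j=1$ then covers all $j\in\mathbb{N}$.

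The only real content beyond bookkeeping is the two base cases, and there the potential pitfall is the $j=1$ computation: one must be careful to use the correct operator identity for $D_0\bfx$ from \lemref{Drelaties} and to keep track of the fact that $D_0$ acts on $m_k$ as zero but $\bfx D_0$ does not vanish once more factors of $\bfx$ are present. I expect no serious obstacle; the decomposition of $\mathbb{N}$ into even and odd indices together with the parity-invariance $B(j+2,k)=B(j,k)$ makes the two-step induction the natural and cleanest route, and \lemref{lemma} does all the heavy lifting. An alternative would be to expand $(\bfx-cD_0)^j$ and argue termwise, but that is messier and the commutator approach is strictly better.
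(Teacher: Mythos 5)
Your proof is correct and follows essentially the same route as the paper: both arguments use \lemref{lemma} to commute $\mE+\bfx D_0$ through the factors $(\bfx-cD_0)^2$ and then evaluate directly on $m_k$ (even case) and on $(\bfx-cD_0)m_k=\bfx m_k$ (odd case), the paper merely phrasing the odd case via the operator identity $(\mE+\bfx D_0)(\bfx-cD_0)=-(\bfx-cD_0)(\mE+\bfx D_0+1)$ instead of your explicit $j=1$ computation. Your base-case calculation for $j=1$ is accurate (note only that the identity $D_0\bfx=-2(\mE+1)-\bfx D_0$ is really relation (iii) of Theorem~\ref{theorem1} rather than the $s=0$ instance of \lemref{Drelaties}, which is stated for $s\geq 1$).
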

\begin{proof} Using the definition of $H_j(m_k)$, we get
\begin{equation*}
(\mE + \bfx D_0)H_{2t} (m_k)(\bfx) = (\bfx-cD_0)^{2t} (\mE+\bfx D_0)m_k(\bfx) = k H_{2t} (m_k)(\bfx)
\end{equation*}
for polynomials $H_j(m_k)$ of even order $j=2t$ and
\begin{align*}
(\mE + \bfx D_0) H_{2t+1} (m_k)(\bfx) &= (\bfx-cD_0)^{2t} (\mE +\bfx D_0)(\bfx-cD_0) m_k(\bfx) \nonumber \\
&= (\bfx-cD_0)^{2t} [-(\bfx-cD_0)(\mE+\bfx D_0)- (\bfx-cD_0)] m_k(\bfx) \nonumber \\
&= (\bfx-cD_0)^{2t} [-(k + 1) (\bfx-cD_0)] m_k(\bfx) \nonumber \\
&= -(k+1)H_{2t+1} (m_k)(\bfx)
\end{align*}
for polynomials $H_j(m_k)$ of odd order $j=2t+1$.
\end{proof}
Now we have all ingredients to obtain a partial differential equation for the polynomials $H_j(m_k)$. Note that this is not a scalar partial differential equation.
\begin{theorem} \label{differential}
The polynomials $H_j(m_k)(\bfx)=h_{j,k}(\bfx)m_k(\bfx)$, with $h_{j,k}$ the Clifford-Hermite polynomial of degree $j$, are solutions of the differential equation
\begin{align*}
cD_0^2 H_{j}(m_k)(\bfx) -\bfx D_0 H_{j}(m_k)(\bfx)+ C(j,k) H_{j}(m_k)(\bfx) =0
\end{align*}
with $C(j,k)=-2t$ if $j=2t$ and $C(j,k)=-2(k+t+1)$ if $j=2t+1$.
\end{theorem}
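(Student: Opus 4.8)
The plan is to recast the asserted identity in terms of eigenvalue relations for $H_j(m_k)$: the one already recorded in \lemref{evenodd}, together with a new one for the second--order operator $cD_0^2 + \mE$. Recall that by definition $H_j(m_k) = (\bfx - cD_0)^j m_k$ (equivalently, by iterating \eqref{recursion}); write $X_+ = \bfx - cD_0$ for brevity.

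First I would compute the commutator of $L := cD_0^2 + \mE$ with $X_+$ using the $\mathfrak{osp}(1|2)$--relations of \thmref{theorem1}. From (ii) and (vii) one gets $[D_0^2,\bfx] = -2D_0$ (since $\{D_0,D_0\} = -2(\partial_{x_0}^2+\pR^2)$ means $D_0^2 = -(\partial_{x_0}^2+\pR^2)$), while (vi) gives $[\mE,\bfx]=\bfx$ and (iv) gives $[\mE,D_0]=-D_0$; hence
\[
[L,X_+] = c\,[D_0^2,\bfx] + [\mE,\bfx] - c\,[\mE,D_0] = -2cD_0 + \bfx + cD_0 = X_+.
\]
Iterating, $L X_+^j = X_+^j(L+j)$, and since $D_0 m_k = 0$ and $m_k$ is homogeneous of degree $k$ so that $\mE m_k = k\,m_k$, applying this to $m_k$ yields the clean relation
\[
(cD_0^2 + \mE)\,H_j(m_k) = (k+j)\,H_j(m_k).
\]

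Next I would eliminate the first--order term via \lemref{evenodd}: since $(\mE+\bfx D_0)H_j(m_k) = B(j,k)H_j(m_k)$, one has $\bfx D_0 H_j(m_k) = \big(B(j,k)-\mE\big)H_j(m_k)$. Substituting this and the relation just obtained, the left--hand side of the differential equation becomes
\[
(cD_0^2+\mE)H_j(m_k) + \big(C(j,k)-B(j,k)\big)H_j(m_k) = \big((k+j)+C(j,k)-B(j,k)\big)H_j(m_k),
\]
and the proof closes with the elementary check that $B(j,k)-C(j,k) = k+j$ in both cases: for $j=2t$, $B-C = k-(-2t)=k+2t$; for $j=2t+1$, $B-C = -(k+1)+2(k+t+1) = k+2t+1$; in either case this equals $k+j$, so the coefficient vanishes.

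I do not anticipate a real obstacle. The one point requiring care is the sign bookkeeping in the commutator $[L,X_+]=X_+$, where one must apply the $\mathfrak{osp}(1|2)$--relations in exactly the form of \thmref{theorem1} --- in particular that $D_0^2$ is the \emph{negative} of the operator $\partial_{x_0}^2+\pR^2$ appearing in relation (vii). Everything after that is routine algebra.
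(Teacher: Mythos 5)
Your argument is correct, and it is genuinely different from the one in the paper. The paper proves the stronger \emph{lowering} identities $D_0H_{2t}(m_k)=-2t\,H_{2t-1}(m_k)$ and $D_0H_{2t+1}(m_k)=-2(k+t+1)H_{2t}(m_k)$ (equations \eqref{id1}--\eqref{id2}) by an induction on $j$ driven by the anticommutator $\{\bfx,D_0\}=-2(\mE+1)$ and \lemref{evenodd}, and then obtains the differential equation in one line by applying $\bfx-cD_0$ to both sides. You instead bypass the lowering identities entirely: you show that $X_+=\bfx-cD_0$ is a step-one raising operator for $L=cD_0^2+\mE$, i.e.\ $[L,X_+]=X_+$, deduce the eigenvalue relation $(cD_0^2+\mE)H_j(m_k)=(k+j)H_j(m_k)$ from $Lm_k=km_k$, and then eliminate $\mE$ using \lemref{evenodd}; the final coefficient check $B(j,k)-C(j,k)=k+j$ is right in both parities. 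I verified the commutator computation ($[D_0^2,\bfx]=-2D_0$ from (ii) and (vii), $[\mE,\bfx]=\bfx$, $[\mE,D_0]=-D_0$) and the iteration $LX_+^j=X_+^j(L+j)$; all steps are sound. Your route is cleaner and more structural --- it exhibits the theorem as a consequence of two simultaneous eigenvalue equations coming from the $\mathfrak{osp}(1|2)$ action --- but note that the paper's detour through \eqref{id1}--\eqref{id2} is not wasted effort: those identities are reused several times afterwards (the second recursion formula, the coefficient recursions leading to the Laguerre expression, and the ladder relations for the Clifford--Hermite functions), so if you adopted your proof in the paper you would still need to establish them separately (they do follow from your two eigenvalue relations together with $\{\bfx,D_0\}=-2(\mE+1)$, but that requires an extra short computation).
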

\begin{proof}
We prove the identities
\begin{align}
D_0H_{2t}(m_k)(\bfx) &= -2t H_{2t-1}(m_k)(\bfx),\label{id1} \\
D_0H_{2t+1}(m_k)(\bfx) &= -2(k+t+1) H_{2t}(m_k)(\bfx)\label{id2},
\end{align}
from which the theorem immediately follows under the action of $\bfx-cD_0$.
We will prove these relations by using the operator relation $\{\bfx, D_0 \} = -2 (\mE + 1)$ and Lemma \ref{evenodd}.\\
For $j=0$ equation \eqref{id1} is trivial. In the case of even degree $j=2t, t \neq 0,$ one has
\begin{align*}
D_0H_{2t}(m_k)(\bfx) 
&= D_{0} (\bfx-cD_0)H_{2t-1}(m_k)(\bfx)\\
&= (-cD_0^2-\bfx D_0-2\mE-2)H_{2t-1}(m_k)(\bfx)\\
&= \left[(\bfx-cD_0)D_0 - 2(\bfx D_0+\mE) -2 \right]H_{2t-1}(m_k)(\bfx)\\
&= \left[(\bfx-cD_0)D_0 + 2(k+1) -2 \right]H_{2t-1}(m_k)(\bfx)\\
&= (\bfx-cD_0)\left[(\bfx-cD_0)D_0 - 2(\bfx D_0+\mE) -2 \right]H_{2t-2}(m_k)(\bfx)\\
& \phantom{=(} + 2k H_{2t-1}(m_k)(\bfx)\\
&= (\bfx-cD_0)^2D_0H_{2t-2}(m_k)(\bfx) + (2k -2k -2) H_{2t-1}(m_k)(\bfx)\\
&= (\bfx-cD_0)^2D_0H_{2t-2}(m_k)(\bfx) - 2 H_{2t-1}(m_k)(\bfx).
\end{align*}
Repeating this procedure $t$ times, we get
\begin{align*}
D_0H_{2t}(m_k)(\bfx) &=-2tH_{2t-1}(m_k)(\bfx) + (\bfx-cD_0)^{2t}D_0 m_k(\bfx) \\
&=-2tH_{2t-1}(m_k)(\bfx).
\end{align*}
The case of odd $j=2t+1$ follows from the previous result and Lemma \ref{evenodd}:
\begin{align*}
D_0H_{2t+1}(m_k)(\bfx) &= D_{0} (\bfx-cD_0)H_{2t}(m_k)(\bfx)\\
&=((\bfx -c D_0) D_0 - 2(\mE + \bfx D_0) -2 ) H_{2t}(m_k)(\bfx)\\
&=-2(t+k+1) H_{2t}(m_k)(\bfx).\end{align*}
This proves the theorem.\end{proof} 
Equations \eqref{id1} and \eqref{id2} give rise to a second recursion formula.
\begin{proposition}
(recursion formula bis) One has
\begin{align*}
H_{j+1}(m_k) = \bfx H_{j}(m_k) - c\ C(j,k) H_{j-1}(m_k)
\end{align*}
with $C(j,k)$ as in Theorem \ref{differential}.
\end{proposition}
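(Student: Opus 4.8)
The plan is to combine the first recursion formula \eqref{recursion} with the identities \eqref{id1} and \eqref{id2} that were established inside the proof of Theorem~\ref{differential}. Starting from $H_{j+1}(m_k)(\bfx) = (\bfx - cD_0)\, H_j(m_k)(\bfx)$, I would simply expand the right-hand side as $\bfx\, H_j(m_k)(\bfx) - c\, D_0 H_j(m_k)(\bfx)$, so that the whole problem reduces to rewriting $D_0 H_j(m_k)$ as a multiple of $H_{j-1}(m_k)$.

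First I would treat the even case $j = 2t$: by \eqref{id1}, $D_0 H_{2t}(m_k) = -2t\, H_{2t-1}(m_k)$, and since $C(2t,k) = -2t$ by Theorem~\ref{differential}, this is exactly $D_0 H_{2t}(m_k) = C(2t,k)\, H_{2t-1}(m_k)$. Then for the odd case $j = 2t+1$: by \eqref{id2}, $D_0 H_{2t+1}(m_k) = -2(k+t+1)\, H_{2t}(m_k)$, and $C(2t+1,k) = -2(k+t+1)$, so again $D_0 H_{2t+1}(m_k) = C(2t+1,k)\, H_{2t}(m_k)$. Hence in both parities one has the uniform identity $D_0 H_j(m_k) = C(j,k)\, H_{j-1}(m_k)$ for $j \geq 1$, and substituting it into the expansion above gives $H_{j+1}(m_k) = \bfx\, H_j(m_k) - c\, C(j,k)\, H_{j-1}(m_k)$, which is the assertion.

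There is no genuine obstacle: the statement is essentially a repackaging of the computations already carried out for Theorem~\ref{differential}, and the only thing to keep straight is the parity-dependent bookkeeping of $C(j,k)$. The one place that deserves a word of care is the lowest index $j = 0$, where $H_{-1}(m_k)$ is not defined; but there $C(0,k) = -2t\big|_{t=0} = 0$, so the spurious term carries coefficient zero and the formula is to be read as $H_1(m_k) = \bfx\, H_0(m_k)$, which is consistent with the explicit list \eqref{vijf}.
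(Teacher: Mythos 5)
Your proof is correct and follows exactly the paper's own route: expand $(\bfx - cD_0)H_j(m_k)$ and substitute the identities \eqref{id1} and \eqref{id2} to rewrite $D_0 H_j(m_k)$ as $C(j,k)\,H_{j-1}(m_k)$. Your additional remark that the boundary case $j=0$ is harmless because $C(0,k)=0$ is a sensible clarification the paper leaves implicit.
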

\begin{proof} Using equations \eqref{id1} and \eqref{id2}, one finds immediately that
\begin{align*}H_{j+1}(m_k) &= (\bfx-cD_0) H_{j}(m_k)\\
&=\bfx H_{j}(m_k) - c\ C(j,k) H_{j-1}(m_k).\end{align*}
\end{proof}
In each factor $(\bfx-cD_0)$ of $H_j (m_k)(\bfx)  =  (\bfx-cD_0)^j m_k(\bfx)$, the first term raises and the second term lowers the degree of the polynomial it is acting upon by one. Therefore
the Hermite polynomials $h_{2t,k}$ (respectively $h_{2t+1,k}$) will consist of even (respectively odd) powers of $\bfx$ only and we may write:
\begin{align}
H_{2t}(m_k)(\bfx)&= \sum_{i=0}^s a_{2i}^{2t} \bfx^{2i} m_k(\bfx)\\
H_{2t+1}(m_k)(\bfx) &=  \sum_{i=0}^s a_{2i+1}^{2t+1} \bfx^{2i+1} m_k(\bfx).
\end{align}
Because the polynomials $H_j(m_k)$ satisfy the identities \eqref{id1} and \eqref{id2}, recursion relations can be derived between the coefficients $a_{2i}^{2t}$ and $a_{2i+1}^{2t+1}$ of the Hermite polynomials $h_{j,k}$. In doing so, we will be able to relate these polynomials to generalised Laguerre polynomials. Using Lemma \ref{Drelaties}, one has
\begin{align*}
\sum_{i=0}^{t-1} -2(i+1) a_{2i+2}^{2t}\bfx^{2i+1} m_k(\bfx) &= -2t \sum_{i=0}^{t-1} a_{2i+1}^{2t-1}\bfx^{2i+1}m_k(\bfx)\\
\sum_{i=0}^{t} -2(i+k+1) a_{2i+1}^{2t+1}\bfx^{2i} m_k(\bfx) &= -2(t+k+1) \sum_{i=0}^{t} a_{2i}^{2t}\bfx^{2i} m_k(\bfx)
\end{align*}
so
\begin{align*}
\phantom{(+\alpha+1)}i\ a_{2i}^{2t} &= t\ a_{2i-1}^{2t-1} \qquad \qquad \\
(i+k+1) \ a_{2i+1}^{2t+1} &= (t+k+1) \ a_{2i}^{2t\phantom{-1}} \qquad
\end{align*}
for $i \in \{0,\ldots,t\}$ and the following recursion relations are found:
\begin{align*}
a_{2i}^{2t} &= \frac{t}{i}\frac{t+k}{i+k}a_{2i-2}^{2t-2} \\
&=\frac{t!}{(t-i)!i!}\frac{(t+k)!}{(t+k-i)!}\frac{k!}{(i+k)!}a^{2t-2i}_0
\intertext{and}
a_{2i+1}^{2t+1} &= \frac{t+k+1}{i+k+1} \frac{t}{i} a_{2i-1}^{2t-1} \\
&=\frac{(t+k+1)!}{(t+k-i+1)!}\frac{(k+1)!}{(i+k+1)!}\frac{t!}{(t-i)!i!}a^{2t-2i+1}_1.
\end{align*}
Putting $\bfx=0$ in the expression $H_j(m_k)(\bfx)=(\bfx-cD_0)H_{j-1}(m_k)(\bfx)$ for $j=2t$ gives $a_0^{2t}=2c(k+1)a_1^{2t-1}$ so we find that 
$a^{2t}_{0} = (2c)^t\frac{(t+k)!}{k!} a_0^0$.
Because $H_0(m_k)(\bfx)=a_0^0m_k(\bfx)=m_k(\bfx)$ we conclude that $a^{2t}_{0} = (2c)^t\frac{(t+k)!}{k!}$ and $a^{2t+1}_{1} = (2c)^t\frac{(t+k+1)!}{(k+1)!}$. The final expressions for the coefficients are thus given by
\begin{align*}
a_{2i}^{2t} &=\binom{t}{i} (2c)^{t-i} \frac{(t+k)!}{(k+i)!} \\
a_{2i+1}^{2t+1}&=\binom{t}{i} (2c)^{t-i} \frac{(t+k+1)!}{(k+i+1)!}.
\end{align*}
This result allows to rewrite the Hermite polynomials as stated in the following theorem.
\begin{theorem}
The Hermite polynomials $h_{j,k}$ can be expressed as
\begin{align*}
h_{2t,k}(\bfx)&=(2c)^t t!\ L^k_t\left(\frac{\vert\bfx\vert^2}{2c}\right)\\
h_{2t+1,k}(\bfx)&=(2c)^t t!\ \bfx\ L^{k+1}_t\left(\frac{\vert\bfx\vert^2}{2c}\right)
\end{align*}
where $L_t^k$ are the generalised Laguerre polynomials of degree $t$ on the real line \cite{andrews2000special}.
\end{theorem}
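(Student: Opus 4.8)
The plan is to feed the explicit coefficient formulas derived just above, namely
\[
a_{2i}^{2t}=\binom{t}{i}(2c)^{t-i}\frac{(t+k)!}{(k+i)!},\qquad
a_{2i+1}^{2t+1}=\binom{t}{i}(2c)^{t-i}\frac{(t+k+1)!}{(k+i+1)!},
\]
into the monomial expansions $h_{2t,k}(\bfx)=\sum_{i=0}^{t}a_{2i}^{2t}\bfx^{2i}$ and $h_{2t+1,k}(\bfx)=\sum_{i=0}^{t}a_{2i+1}^{2t+1}\bfx^{2i+1}$ established earlier, and then to recognise the outcome as a generalised Laguerre polynomial. The single Clifford-specific ingredient is the identity $\bfx^2=-\vert\bfx\vert^2$, equivalently $\bfx^{2i}=(-1)^i\vert\bfx\vert^{2i}$; everything else is factorial bookkeeping.

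First I would recall the standard series representation $L_t^k(y)=\sum_{i=0}^{t}(-1)^i\binom{t+k}{t-i}\frac{y^i}{i!}$ from \cite{andrews2000special}. Evaluating at $y=\vert\bfx\vert^2/(2c)$ and multiplying by $(2c)^t\,t!$ yields
\[
(2c)^t t!\,L_t^k\!\left(\frac{\vert\bfx\vert^2}{2c}\right)
=\sum_{i=0}^{t}(-1)^i\binom{t+k}{t-i}\frac{t!}{i!}(2c)^{t-i}\vert\bfx\vert^{2i}
=\sum_{i=0}^{t}\binom{t+k}{t-i}\frac{t!}{i!}(2c)^{t-i}\bfx^{2i},
\]
the last equality absorbing the alternating sign via $\vert\bfx\vert^{2i}=(-1)^i\bfx^{2i}$. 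Comparing with the expansion of $h_{2t,k}$, the even case reduces to the elementary identity
\[
\binom{t+k}{t-i}\frac{t!}{i!}=\frac{t!\,(t+k)!}{(t-i)!\,i!\,(k+i)!}=\binom{t}{i}\frac{(t+k)!}{(k+i)!}=a_{2i}^{2t}.
\]

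For odd degree the computation is identical with $k$ replaced by $k+1$ and an extra factor $\bfx$ pulled out: expanding $(2c)^t t!\,\bfx\,L_t^{k+1}(\vert\bfx\vert^2/(2c))$ in the same way produces $\sum_{i=0}^{t}\binom{t+k+1}{t-i}\frac{t!}{i!}(2c)^{t-i}\bfx^{2i+1}$, and the twin identity $\binom{t+k+1}{t-i}\frac{t!}{i!}=\binom{t}{i}\frac{(t+k+1)!}{(k+i+1)!}=a_{2i+1}^{2t+1}$ closes the argument. I do not anticipate a genuine obstacle here; the only point needing care is matching the sign coming from $\bfx^2=-\vert\bfx\vert^2$ against the alternating sign in the Laguerre series, and as a consistency check one can confront the resulting formulas with the explicit polynomials $h_{0,k},\dots,h_{4,k}$ listed in \eqref{vijf}.
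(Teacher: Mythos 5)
Your proposal is correct and follows essentially the same route as the paper: both substitute the explicit coefficients $a_{2i}^{2t}$ and $a_{2i+1}^{2t+1}$ into the monomial expansions, use $\bfx^{2}=-\vert\bfx\vert^{2}$ to absorb the alternating sign of the Laguerre series, and finish with the factorial identity $\binom{t+k}{t-i}\frac{t!}{i!}=\binom{t}{i}\frac{(t+k)!}{(k+i)!}$. The only cosmetic difference is the direction of the computation (you expand the Laguerre side, the paper rewrites the coefficient side), which is immaterial.
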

\begin{proof}
Because of the preceding considerations, it follows that
\begin{align*}
h_{2t,k}(\bfx)&= \sum_{i=0}^t \binom{t}{i} (2c)^{t-i} \frac{(t+k)!}{(k+i)!} \bfx^{2i}\\
&=(2c)^{t} \sum_{i=0}^t \frac{(-1)^i}{i!} \frac{t!}{(t-i)!} \frac{(t+k)!}{(k+i)!} \left( -\frac{\bfx^{2}}{2c} \right)^i\\
&=(2c)^t t!\ L^k_t\left(\frac{\vert\bfx\vert^2}{2c}\right)
\end{align*}
and analogously for $h_{2t+1,k}$.
\end{proof}
We conclude this paragraph by considering the Rodrigues formula. In order to do so, we first define the generalized Gaussian function
\begin{align}
\exp(-\lvert \bfx \lvert^2) = \sum_{s=0}^\infty \frac{(-1)^s}{s!} \lvert\bfx\lvert^{2s}.
\end{align}
We get the following theorem.
\begin{theorem}
(Rodrigues formula) The polynomials $H_j(m_k)$ can be written as 
\begin{align*}
H_j(m_k)(\bfx)&= \exp\left(\frac{\lvert\bfx\lvert^2}{\alpha}\right) \left[\left(1-\frac{2c}{\alpha}\right) \bfx -cD_0\right]^j \exp\left(-\frac{\lvert\bfx\lvert^2}{\alpha}\right)m_k(\bfx)
\end{align*} with $\alpha \in \mathbb{R}^+_0$.
\end{theorem}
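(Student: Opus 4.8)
\section*{Proof proposal}

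The plan is to recognise the bracketed operator in the Rodrigues formula as the conjugate of the raising operator $\bfx-cD_0$ by the scalar Gaussian weight, so that the statement collapses to the definition $H_j(m_k)(\bfx)=(\bfx-cD_0)^j m_k(\bfx)$. Set $g=\exp\!\left(-\lvert\bfx\lvert^2/\alpha\right)$. Since $\lvert\bfx\lvert^2=x_0^2+r^2$ in the coordinates $(x_0,r,\uo)$, the series defining $g$ is an ordinary convergent power series in $x_0$ and $r$; hence $g$ is a smooth, scalar-valued (therefore central) nowhere-vanishing function with $\partial_{x_0}g=-\tfrac{2x_0}{\alpha}g$ and $\pR g=-\tfrac{2r}{\alpha}g$, and the positivity $\alpha\in\mathbb{R}^+_0$ guarantees that $g^{-1}=\exp\!\left(\lvert\bfx\lvert^2/\alpha\right)$ makes sense.

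First I would compute, for an arbitrary differentiable $Cl_{m+1}$-valued $f$, the action of $D_0=e_0\partial_{x_0}+\uo\pR$ on $gf$ by the product rule. Because $g$ and its derivatives are scalars, they commute past the basis vectors and one obtains
\[
D_0(gf)=-\frac{2x_0}{\alpha}e_0\,gf-\frac{2r}{\alpha}\uo\,gf+g\,D_0 f=g\Bigl(D_0-\frac{2}{\alpha}\bfx\Bigr)f ,
\]
using $\bfx=x_0e_0+r\uo$. Thus, as operators, $g^{-1}\circ D_0\circ g=D_0-\tfrac{2}{\alpha}\bfx$, while trivially $g^{-1}\circ\bfx\circ g=\bfx$. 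Combining these two identities linearly gives
\[
g^{-1}\circ\Bigl[\Bigl(1-\frac{2c}{\alpha}\Bigr)\bfx-cD_0\Bigr]\circ g=\Bigl(1-\frac{2c}{\alpha}\Bigr)\bfx-c\Bigl(D_0-\frac{2}{\alpha}\bfx\Bigr)=\bfx-cD_0 .
\]

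Conjugation by an invertible scalar function is multiplicative, so iterating $j$ times yields
\[
g^{-1}\circ\Bigl[\Bigl(1-\frac{2c}{\alpha}\Bigr)\bfx-cD_0\Bigr]^{j}\circ g=(\bfx-cD_0)^{j},
\]
equivalently $\bigl[(1-\tfrac{2c}{\alpha})\bfx-cD_0\bigr]^{j}(g\,m_k)=g\,(\bfx-cD_0)^{j}m_k$. Multiplying on the left by $g^{-1}=\exp\!\left(\lvert\bfx\lvert^2/\alpha\right)$ and invoking $(\bfx-cD_0)^{j}m_k=H_j(m_k)$ gives exactly the claimed formula.

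The computation is short, and the only points requiring care are bookkeeping ones: verifying that the formal series $\exp\!\left(-\lvert\bfx\lvert^2/\alpha\right)$ genuinely represents the central function $e^{-(x_0^2+r^2)/\alpha}$, so that termwise differentiation is legitimate and $g$ passes through the $e_i$; and applying the conjugation/iteration argument on a space (polynomials times $m_k$, or suitably smooth functions) on which all operators are defined, so that $g^{-1}\circ A\circ g\circ g^{-1}\circ B\circ g=g^{-1}\circ AB\circ g$ is valid. I do not expect a genuine obstacle here; the essential mechanism is just that conjugation by the scalar Gaussian shifts $D_0$ by a multiple of $\bfx$, which the coefficient $1-\tfrac{2c}{\alpha}$ is designed to absorb.
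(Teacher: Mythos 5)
Your proposal is correct and follows essentially the same route as the paper: both reduce the statement to the single conjugation identity $\exp\left(\lvert\bfx\lvert^2/\alpha\right)\,D_0\,\exp\left(-\lvert\bfx\lvert^2/\alpha\right)=D_0-\tfrac{2}{\alpha}\bfx$, observe that $\bfx$ commutes with the Gaussian, and iterate $j$ times. The only difference is that you derive this identity by the product rule applied to the scalar function $e^{-(x_0^2+r^2)/\alpha}$, whereas the paper obtains it algebraically by applying the operator identity $D_0\bfx^{2s}=-2s\bfx^{2s-1}+\bfx^{2s}D_0$ of Lemma 2.2 term by term to the power series defining the generalized Gaussian; since that series is precisely $e^{-(x_0^2+r^2)/\alpha}$, the two derivations are equivalent.
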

\begin{proof}
Using Lemma \ref{Drelaties}, one has
\begin{align}
\exp\left(\frac{\lvert\bfx\lvert^2}{\alpha}\right) D_0 \exp\left(-\frac{\lvert\bfx\lvert^2}{\alpha}\right)&= \exp\left(\frac{\lvert\bfx\lvert^2}{\alpha}\right) \sum_{i=0}^\infty \frac{1}{i!} \left(\frac{1}{\alpha} \right)^i D_0 \bfx^{2i}\nonumber\\
&= \exp\left(\frac{\lvert\bfx\lvert^2}{\alpha}\right) \sum_{i=0}^\infty \frac{1}{i!}\left(\frac{1}{\alpha} \right)^i (-2i \bfx^{2i-1} + \bfx^{2i} D_0)\nonumber\\
&= \left(-\frac{2}{\alpha}\bfx+D_0\right) \label{alpha}.
\end{align}
Because $\bfx$ commutes with $\exp\left(-\frac{\lvert\bfx\lvert^2}{\alpha}\right)$, it follows that
\begin{align}\label{alphas}
\exp\left(\frac{\lvert\bfx\lvert^2}{\alpha}\right) \left[\left(1-\frac{2c}{\alpha}\right) \bfx -cD_0\right] \exp\left(-\frac{\lvert\bfx\lvert^2}{\alpha}\right) = \left(\bfx-cD_0\right).
\end{align}
This proves the theorem.
\end{proof}
%
%
\section{Clifford-Hermite functions}
\setcounter{equation}{0}
Based on the definition of the classical Hermite polynomials, we defined the Clifford-Hermite polynomials $h_{j,k}$ as $h_{j,k}(\bfx)m_k(\bfx)=(\bfx-cD_0)^j m_k(\bfx)$. The analogy is pushed further by introducing Clifford-Hermite functions $\psi_{j,k}$ as a product of the corresponding polynomial $h_{j,k}$ and an exponential function: 
\begin{equation}\label{CHdef}
\psi^\beta_{j,k}(\bfx)=h_{j,k}(\bfx)m_k(\bfx)\exp\left(-\frac{\vert \bfx \vert^2}{\beta}\right),
\end{equation}
with $\beta\in\mR^+_0$ to be determined. The parameter $\beta$ will be fixed such that the resulting Clifford-Hermite functions $\psi_{j,k}$ are orthogonal with respect to the inner product in Proposition \ref{innerprod}. 
\begin{proposition}
With respect to the inner product in Proposition \ref{innerprod}, the functions 
\begin{equation*}
\psi^\beta_{j,k}(\bfx)=h_{j,k}(\bfx)m_k(\bfx)\exp\left(-\frac{\vert \bfx \vert^2}{\beta}\right),
\end{equation*}
$\beta\in\mR^+_0$, can only be orthogonal if $\beta=4c$.
\end{proposition}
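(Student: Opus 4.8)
The plan is to observe that a single orthogonality relation already fixes the value of $\beta$: if the family $\{\psi^\beta_{j,k}\}_{j\ge 0}$ (with $k$ fixed) is orthogonal, then in particular $\langle\psi^\beta_{2,k},\psi^\beta_{0,k}\rangle=0$, and I will compute this inner product and find that it vanishes exactly when $\beta=4c$. No case analysis is then needed.

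First I would lift the polynomial recursion $H_{j+1}(m_k)=(\bfx-cD_0)H_j(m_k)$ to the functions $\psi^\beta_{j,k}=H_j(m_k)E$, $E:=\exp(-|\bfx|^2/\beta)$. Since $E$ is a real scalar it commutes with $\bfx$ and with every Clifford coefficient, while $D_0$ genuinely differentiates it; the precise effect is the computation behind \eqref{alpha}, namely $D_0(E\,g)=E\,(-\tfrac{2}{\beta}\bfx+D_0)g$ and $D_0(E^{-1}g)=E^{-1}(\tfrac{2}{\beta}\bfx+D_0)g$. Substituting $H_j(m_k)=E^{-1}\psi^\beta_{j,k}$ into the recursion and multiplying by $E$ yields the raising relation
\[
\psi^\beta_{j+1,k}=\mathcal A\,\psi^\beta_{j,k},\qquad \mathcal A:=\Bigl(1-\tfrac{2c}{\beta}\Bigr)\bfx-cD_0 .
\]
(Check: since $D_0m_k=0$, $\mathcal A(m_kE)=(1-\tfrac{2c}{\beta})\bfx m_kE+\tfrac{2c}{\beta}\bfx m_kE=\bfx m_kE=\psi^\beta_{1,k}$.)

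Next I would compute the adjoint of $\mathcal A$ for the inner product of Proposition~\ref{innerprod}. From $\langle D_0f,g\rangle=\langle f,D_0g\rangle$, $\langle\bfx f,g\rangle=-\langle f,\bfx g\rangle$, and the fact that complex scalars commute through the conjugation, one obtains $\mathcal A^{\dagger}=-\bigl(1-\tfrac{2c}{\beta}\bigr)\bfx-cD_0$. Hence, by Proposition~\ref{innerprod},
\[
\langle\psi^\beta_{2,k},\psi^\beta_{0,k}\rangle=\langle\mathcal A\psi^\beta_{1,k},\psi^\beta_{0,k}\rangle=\langle\psi^\beta_{1,k},\mathcal A^{\dagger}\psi^\beta_{0,k}\rangle .
\]
Now $\mathcal A^{\dagger}\psi^\beta_{0,k}=-\bigl(1-\tfrac{2c}{\beta}\bigr)\bfx m_kE-c\,D_0(m_kE)=\bigl(\tfrac{4c}{\beta}-1\bigr)\bfx m_kE=\bigl(\tfrac{4c}{\beta}-1\bigr)\psi^\beta_{1,k}$, so
\[
\langle\psi^\beta_{2,k},\psi^\beta_{0,k}\rangle=\Bigl(\tfrac{4c}{\beta}-1\Bigr)\langle\psi^\beta_{1,k},\psi^\beta_{1,k}\rangle .
\]
A short direct computation, using $(e_0+1)(e_0-1)=-2$ and $(x_0-\ux)^{k}(x_0+\ux)^{k}=|\bfx|^{2k}$, gives $\overline{\psi^\beta_{1,k}}\,\psi^\beta_{1,k}=2\,|\bfx|^{2k+2}\,\overline{\mathbf a}\mathbf a\,E^{2}$, which has strictly positive scalar part off the origin; hence $\langle\psi^\beta_{1,k},\psi^\beta_{1,k}\rangle\neq 0$ for $\mathbf a\neq 0$, and orthogonality forces $\tfrac{4c}{\beta}-1=0$, i.e.\ $\beta=4c$.

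The step I expect to be most delicate is the Gaussian bookkeeping in the two identities for $D_0(E^{\pm1}g)$, together with the careful tracking of the complex parameter $c$ and of the sign in $\langle\bfx f,g\rangle=-\langle f,\bfx g\rangle$ through $\mathcal A^{\dagger}$; the rest is formal. As a cross-check one can bypass the operator manipulations entirely: writing $\langle\psi^\beta_{2,k},\psi^\beta_{0,k}\rangle$ out directly with $h_{2,k}=\bfx^{2}+2c(k+1)$ and $\overline{m_k}m_k=2|\bfx|^{2k}\overline{\mathbf a}\mathbf a$, then passing to $\rho^{2}=x_0^{2}+r^{2}$ (the integrations over $\uo\in\mathbb{S}^{m-1}$ and over the angle in the $(x_0,r)$ half-plane only contribute constants), reduces the problem to the $\Gamma$-integrals $\int_0^\infty u^{k}e^{-2u/\beta}\,du$ and $\int_0^\infty u^{k+1}e^{-2u/\beta}\,du$ and again produces the factor $2c-\tfrac{\beta}{2}$.
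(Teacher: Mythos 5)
Your argument is correct, but it takes a genuinely different route from the paper. The paper fixes $k=0$ and evaluates $\langle \psi^{\beta}_{0,0},\psi^{\beta}_{2,0}\rangle$ by brute force: it writes out the integral with $h_{2,0}=\bfx^2+2c$, reduces to one-dimensional Gaussian moments in $x_0$ and $r$, and reads off the factor $(4c-\beta)$. You instead lift the recursion $H_{j+1}(m_k)=(\bfx-cD_0)H_j(m_k)$ to a $\beta$-dependent raising operator $\mathcal A=(1-\tfrac{2c}{\beta})\bfx-cD_0$ on the functions, compute $\mathcal A^{\dagger}$ from Proposition \ref{innerprod}, and obtain $\langle\psi^\beta_{2,k},\psi^\beta_{0,k}\rangle=(\tfrac{4c}{\beta}-1)\langle\psi^\beta_{1,k},\psi^\beta_{1,k}\rangle$, so that only the (easy) positivity of $[\overline{\psi^\beta_{1,k}}\psi^\beta_{1,k}]_0$ remains. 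This is essentially the paper's \emph{later} ladder-operator machinery ($\tilde{D_c}$, $\tilde{D_c}^{\dagger}$ and relation \eqref{alphas}) run at general $\beta$ rather than at $\beta=4c$, and it buys you two things: the necessity of $\beta=4c$ for every $k$ at once (the paper only tests $k=0$), and no Gaussian integral beyond a manifestly positive norm; the paper's computation is more elementary and self-contained at this point in the text, since \eqref{alpha}--\eqref{alphas} are only established in the Rodrigues-formula theorem. Two small caveats, shared with the paper's own conventions: your adjoint $\mathcal A^{\dagger}=-(1-\tfrac{2c}{\beta})\bfx-cD_0$ silently treats $c$ as real (for genuinely complex $c$ one would conjugate the scalar coefficients, but then $\beta=4c$ with $\beta\in\mR^+_0$ is vacuous anyway), and the positivity of the scalar part of $\overline{\mathbf a}\mathbf a$ is the standard statement for real Clifford coefficients. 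Neither affects the validity of the argument.
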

\begin{proof}
The orthogonality of the functions $\psi^{\beta}_{j,k}$ implies that $\langle \psi^{\beta}_{j_1,k_1} \vert \psi^{\beta}_{j_2,k_2} \rangle=0$ for all $j_1,j_2,k_1,k_2 \in \mathbb{N} \cup \{0\}$ with $j_1\not=j_2$ or $k_1\not= k_2$. Based on equation \eqref{vijf} and definition \ref{CHdef}, we write $\psi^{\beta}_{0,0}=\mathbf{a_1}\exp{(-\lvert \bfx \lvert^2/\beta)}$ and $\psi^{\beta}_{2,0}=(\bfx^2+2c)\mathbf{a_2}\exp{(-\lvert \bfx \lvert^2/\beta)}$ with $\mathbf{a_1},\mathbf{a_2} \in Cl_{m+1}$. The inner product of $\psi^{\beta}_{0,0}$ and $\psi^{\beta}_{2,0}$ is then
\begin{align*}
\langle \psi^{\beta}_{0,0}, \psi^{\beta}_{2,0} \rangle &= \overline{\mathbf{a}}_{\mathbf{1}} \int_{\mathbb{R}^{m+1}} (\bfx^2+2c) \exp{\left(-2\frac{\lvert \bfx \lvert^2}{\beta}\right)} r^{1-m} \mathrm{d}\bfx\ \mathbf{a_2}\\
&= \overline{\mathbf{a}}_{\mathbf{1}} \int_{-\infty}^{+\infty} \int_{0}^{+\infty} \int_{\mathbb{S}^{m-1}} (-x_0^2 - r^2+2c) \exp{\left(-2\frac{x_0^2+r^2}{\beta}\right)} \mathrm{d}x_0 \mathrm{d}r \mathrm{d}\uo\ \mathbf{a_2}.
\end{align*}
Because $\int_{-\infty}^{+\infty} \exp(-x^2) \mathrm{d}x = \sqrt{\pi}$, the integral yields
\begin{align*}
\langle \psi^{\beta}_{0,0}, \psi^{\beta}_{2,0} \rangle = \overline{\mathbf{a}}_{\mathbf{1}}\mathbf{a_2} \left(4c - \beta \right) \left(\frac{\pi\beta}{8} \right) \int_{\mathbb{S}^{m-1}} \mathrm{d}\uo.
\end{align*}
The remaining integral gives the surface of an $(m-1)$-dimensional sphere, which is a real number. The expression will thus only be zero if $\beta=4c$.
\end{proof}
We thus define the Clifford-Hermite functions as 
\begin{equation*}
\psi_{j,k}(\bfx)=h_{j,k}(\bfx)m_k(\bfx)\exp\left(-\frac{\vert \bfx \vert^2}{4c}\right).
\end{equation*}
For this particular value of $\beta$, these functions establish a remarkable property.
\begin{proposition}
The Clifford-Hermite functions $\psi_{j,k}$ satisfy the relations
\begin{align}\label{psis}
\begin{cases}
\phantom{\tilde{D_c}^\dagger}\psi_{j,k}&=\tilde{D_c}\psi_{j-1,k}\\
\tilde{D_c}^\dagger\psi_{j,k}&= -c\ C(j,k) \psi_{j-1,k}
\end{cases}
\end{align}
with $\tilde{D_c}=\frac{\bfx}{2} -cD_0$ and $C(j,k)$ as in Theorem \ref{differential}. The dagger denotes the adjoint with respect to the inner product defined in Proposition \ref{innerprod}.
\end{proposition}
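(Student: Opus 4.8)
The plan is to deduce both identities by conjugating the relevant operators with the Gaussian, reusing the two operator identities \eqref{alpha} and \eqref{alphas} from the proof of the Rodrigues formula, specialised to $\alpha = 4c$. Set $G = \exp\left(-\frac{\lvert\bfx\rvert^2}{4c}\right)$; since $G$ is scalar-valued it commutes with every $Cl_{m+1}$-valued function, so by \eqref{CHdef} (with $\beta = 4c$) we have $\psi_{j,k} = h_{j,k}(\bfx)m_k(\bfx)\,G = G\,H_j(m_k)(\bfx)$, and in particular $\psi_{0,k} = G\,m_k$. With $\alpha = 4c$ one has $1 - \frac{2c}{\alpha} = \frac{1}{2}$, so \eqref{alphas} becomes the operator identity $G^{-1}\tilde{D_c}\,G = \bfx - cD_0$ on a suitable dense domain; iterating, $G^{-1}\tilde{D_c}^{\,j}\,G = (\bfx-cD_0)^{j}$. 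Applying this to $m_k$ and using $\psi_{0,k} = G\,m_k$ gives
\[
\tilde{D_c}^{\,j}\psi_{0,k} = G\,(\bfx-cD_0)^{j}m_k = G\,H_j(m_k) = \psi_{j,k},
\]
whence $\psi_{j,k} = \tilde{D_c}\bigl(\tilde{D_c}^{\,j-1}\psi_{0,k}\bigr) = \tilde{D_c}\psi_{j-1,k}$, which is the first relation.

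For the second relation I would first identify the adjoint. By Proposition \ref{innerprod}, $D_0^\dagger = D_0$ and $\bfx^\dagger = -\bfx$ on a dense subset of $\mathcal{L}_2$, and since here $\beta = 4c \in \mathbb{R}^+_0$ the parameter $c$ is a positive real, so $\tilde{D_c}^\dagger = -\frac{\bfx}{2} - cD_0$. Conjugating by $G$ and using \eqref{alpha} with $\alpha = 4c$, i.e. $G^{-1}D_0\,G = D_0 - \frac{1}{2c}\bfx$, together with the fact that $\bfx$ commutes with $G$ (as $\lvert\bfx\rvert^2 = -\bfx^2$ is scalar), gives
\[
G^{-1}\tilde{D_c}^{\dagger}\,G = -\frac{\bfx}{2} - c\Bigl(D_0 - \frac{1}{2c}\bfx\Bigr) = -cD_0 .
\]

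Finally I would combine this with equations \eqref{id1} and \eqref{id2} from the proof of Theorem \ref{differential}, which together say precisely that $D_0 H_j(m_k) = C(j,k)\,H_{j-1}(m_k)$ for every $j \geq 1$, since $C(2t,k) = -2t$ and $C(2t+1,k) = -2(k+t+1)$ are exactly the coefficients occurring in \eqref{id1} and \eqref{id2}. This yields
\[
\tilde{D_c}^{\dagger}\psi_{j,k} = \tilde{D_c}^{\dagger}\bigl(G\,H_j(m_k)\bigr) = G\,(-cD_0)H_j(m_k) = -c\,C(j,k)\,G\,H_{j-1}(m_k) = -c\,C(j,k)\,\psi_{j-1,k},
\]
which is the second relation. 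The only genuinely delicate points are the bookkeeping in the conjugation-by-Gaussian identities and checking that $\psi_{j,k}$ and $\tilde{D_c}\psi_{j-1,k}$ lie in the dense domain on which Proposition \ref{innerprod} grants self-adjointness of $D_0$ and skew-adjointness of $\bfx$; once those are in place the computation is routine.
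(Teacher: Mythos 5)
Your proposal is correct and follows essentially the same route as the paper: both parts rest on the Gaussian conjugation identities \eqref{alpha} and \eqref{alphas} specialised to $\alpha=4c$, the identification $\tilde{D_c}^\dagger=-\frac{\bfx}{2}-cD_0$ via Proposition \ref{innerprod}, and the reduction $D_0H_j(m_k)=C(j,k)H_{j-1}(m_k)$ from \eqref{id1}--\eqref{id2}. Your explicit remark that $\beta=4c\in\mR^+_0$ forces $c$ to be real (so that $(cD_0)^\dagger=cD_0$) is a small point the paper leaves implicit, but it does not change the argument.
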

\begin{proof}
Based on equation \eqref{alphas}, one has
\begin{align*}
\psi_{j,k}(\bfx)&=\exp\left(-\frac{\vert \bfx \vert^2}{4c}\right)\left(\bfx -cD_0 \right) h_{j-1,k}(\bfx)m_k(\bfx)\\
&=\left(\frac{\bfx}{2} -cD_0\right) \exp\left(-\frac{\lvert\bfx\lvert^2}{4c}\right) h_{j-1,k}(\bfx)m_k(\bfx)\\
&=\left(\frac{\bfx}{2} -cD_0\right) \psi_{j-1,k}(\bfx).
\intertext{With respect to the inner product in Proposition \ref{innerprod}, the action of the adjoint operator $\tilde{D_c}^\dagger = \left(\frac{\bfx}{2} -cD_0\right)^\dagger$ on $\psi_{j,k}$ is given by}
\left(\tilde{D_c}\right)^\dagger \psi_{j,k}(\bfx)&=\left(-\frac{\bfx}{2} -cD_0\right) \exp\left(-\frac{\lvert\bfx\lvert^2}{4c}\right) h_{j,k}(\bfx)m_k(\bfx)\\
&=\exp\left(-\frac{\lvert\bfx\lvert^2}{4c}\right) \left[-\bfx + \bfx -cD_0 \right] h_{j,k}(\bfx)m_k(\bfx)\\
&=-c \exp\left(-\frac{\lvert\bfx\lvert^2}{4c}\right) D_0 h_{j,k}(\bfx)m_k(\bfx),
\end{align*}
again because of equation \eqref{alphas}. Applying equations \eqref{id1} and \eqref{id2} to the last expression ends the proof.
\end{proof}
The main purpose of this section is to prove the orthogonality of the Clifford-Hermite functions, so we want to calculate the inner product of two functions $\psi_{j_1,k_1}$ and $\psi_{j_2,k_2}$. The above definition of the Clifford-Hermite functions (with $\beta=4c$) and the associated properties \eqref{psis} allow a drastic simplification of this calculation. One has:
\begin{align}
\langle \psi_{j_1,k_1} ,\psi_{j_2,k_2} \rangle &= \langle \tilde{D_c} \psi_{j_1-1,k_1} ,\psi_{j_2,k_2} \rangle \nonumber\\
&=\langle \psi_{j_1-1,k_1} ,\tilde{D_c}^\dagger \psi_{j_2,k_2} \rangle \nonumber\\
&=-c\ C(j_2,k_2) \langle \psi_{j_1-1,k_1}, \psi_{j_2-1,k_2} \rangle \label{proces}.
\end{align}
Continuing this procedure further decreases the degree of both Hermite functions. Before we give the final expression for this inner product, we prove the following lemma:
\begin{lemma} \label{innermonogeen}
One has
\begin{align*}
\left\langle \psi_{0,k_1}, \psi_{0,k_2} \right\rangle&=\left\langle m_{k_1}(\bfx)e^{-\vert \bfx \vert^2/4c}, m_{k_2}(\bfx)e^{-\vert \bfx \vert^2/4c} \right\rangle\\
&=2\overline{\mathbf{a}}_\mathbf{1} \mathbf{a_2} (2c)^{k_1+1} \pi^{m/2+1} \frac{\Gamma(k_1+1)}{\Gamma(m/2)} \delta_{k_1k_2}
\end{align*} where $m_{k_i}(\bfx)=(e_0-1)(x_0+\ux)^{k_i} \mathbf{a_i}$ with $\mathbf{a_i}\in Cl_{m+1}$ and $i=1,2$.
\end{lemma}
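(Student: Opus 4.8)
The plan is to reduce the left-hand side to a scalar integral in polar-type coordinates and then evaluate it explicitly. Since $h_{0,k}\equiv 1$ we have $\psi_{0,k}(\bfx)=m_k(\bfx)\exp(-|\bfx|^2/4c)$, so by the form of the inner product in Proposition~\ref{innerprod},
\[
\langle\psi_{0,k_1},\psi_{0,k_2}\rangle=\int_{\mathbb{R}^{m+1}}\overline{m_{k_1}(\bfx)}\,m_{k_2}(\bfx)\,e^{-|\bfx|^2/2c}\,\mathrm{d}x_0\,\mathrm{d}r\,\mathrm{d}\uo .
\]
First I would simplify the Clifford factor. Writing $m_{k_i}=(e_0-1)(x_0+\ux)^{k_i}\mathbf{a_i}$ and using $\overline{e_i}=-e_i$ together with the anti-automorphism property $\overline{AB}=\overline{B}\,\overline{A}$, one finds $\overline{m_{k_1}}=-\,\overline{\mathbf{a_1}}\,(x_0-\ux)^{k_1}(e_0+1)$; since $(e_0+1)(e_0-1)=e_0^2-1=-2$, this collapses to
\[
\overline{m_{k_1}(\bfx)}\,m_{k_2}(\bfx)=2\,\overline{\mathbf{a_1}}\,(x_0-\ux)^{k_1}(x_0+\ux)^{k_2}\,\mathbf{a_2}.
\]

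Next I would pass to the coordinates $x_0=\rho\cos\phi$, $r=\rho\sin\phi$ with $\rho\in[0,\infty)$, $\phi\in[0,\pi]$, so that $\mathrm{d}x_0\,\mathrm{d}r=\rho\,\mathrm{d}\rho\,\mathrm{d}\phi$ and $|\bfx|^2=\rho^2$. The key observation is that $x_0\pm\ux$ lie in the commutative subalgebra generated by the unit vector $\uo=\ux/r$ (with $\uo^2=-1$), where $x_0+\ux=\rho(\cos\phi+\uo\sin\phi)=\rho e^{\phi\uo}$ and $x_0-\ux=\rho e^{-\phi\uo}$; hence $(x_0-\ux)^{k_1}(x_0+\ux)^{k_2}=\rho^{k_1+k_2}e^{(k_2-k_1)\phi\uo}$. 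Because $\overline{\mathbf{a_1}}$ and $\mathbf{a_2}$ are constant and the radial weight is scalar, the integral factorises (using only linearity of the integral and that constants leave an integral on the appropriate side) as
\[
\langle\psi_{0,k_1},\psi_{0,k_2}\rangle=2\,\overline{\mathbf{a_1}}\left(\int_0^\infty\rho^{\,k_1+k_2+1}e^{-\rho^2/2c}\,\mathrm{d}\rho\right)\left(\int_{\mathbb{S}^{m-1}}\int_0^\pi e^{(k_2-k_1)\phi\uo}\,\mathrm{d}\phi\,\mathrm{d}\uo\right)\mathbf{a_2}.
\]

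The angular integral is where the Kronecker delta arises, and this is the step that needs the most care, since $e^{n\phi\uo}$ with $n=k_2-k_1$ is genuinely non-scalar. Expanding $e^{n\phi\uo}=\cos(n\phi)+\uo\sin(n\phi)$ and integrating over $\phi\in[0,\pi]$ yields $0$ when $n$ is a nonzero even integer and $\tfrac{2}{n}\uo$ when $n$ is odd; the subsequent integration over $\uo\in\mathbb{S}^{m-1}$ then annihilates the odd case because $\int_{\mathbb{S}^{m-1}}\uo\,\mathrm{d}\uo=0$ by parity. Hence the whole expression vanishes whenever $k_1\neq k_2$. When $k_1=k_2=k$ the exponential is $1$, so the angular integral equals $\pi\,|\mathbb{S}^{m-1}|=2\pi^{m/2+1}/\Gamma(m/2)$, while the substitution $u=\rho^2/2c$ gives $\int_0^\infty\rho^{\,2k+1}e^{-\rho^2/2c}\,\mathrm{d}\rho=\tfrac12(2c)^{k+1}\Gamma(k+1)$. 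Multiplying these by the prefactor $2\,\overline{\mathbf{a_1}}\,\mathbf{a_2}$ (legitimate here since $(x_0-\ux)^k(x_0+\ux)^k=\rho^{2k}$ is scalar) produces $2\,\overline{\mathbf{a_1}}\,\mathbf{a_2}\,(2c)^{k+1}\pi^{m/2+1}\Gamma(k+1)/\Gamma(m/2)$, which together with the factor $\delta_{k_1k_2}$ is exactly the asserted identity. Apart from this non-commutative angular integral, the rest is routine Clifford conjugation bookkeeping and standard Gamma-function evaluations.
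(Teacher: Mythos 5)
Your proof is correct, and it follows the paper's skeleton up to the crucial step: the same collapse of the Clifford factor via $(e_0+1)(e_0-1)=-2$ to get $2\,\overline{\mathbf{a}}_{\mathbf{1}}(x_0-\ux)^{k_1}(x_0+\ux)^{k_2}\mathbf{a_2}$, the same passage to polar coordinates in the $(x_0,r)$ half-plane, and the same radial Gamma-integral. Where you genuinely diverge is in the angular integral, which is where $\delta_{k_1k_2}$ is produced. The paper first writes $(x_0-\ux)^{k_1}(x_0+\ux)^{k_2}=(x_0^2+r^2)^{k_1}(x_0+\ux)^{K}$ with $K=k_2-k_1$, binomially expands $(\cos\theta+\sin\theta\,\uo)^K$, evaluates the Wallis integrals $\int_0^\pi\cos^{2I}\theta\sin^{K-2I}\theta\,\mathrm{d}\theta$ and the spherical moments $\int_{\mS^{m-1}}\uo^{K-2I}\mathrm{d}\sigma$ separately, and finally collapses the resulting sum through the alternating binomial identity $\sum_I\binom{L}{I}(-1)^{L-I}=(1-1)^L$. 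You instead exploit that $x_0\pm\ux=\rho e^{\mp\phi\uo}$ in the commutative subalgebra $\mR[\uo]\cong\mC$, so the angular integrand is the single exponential $e^{(k_2-k_1)\phi\uo}$, whose $\phi$-integral over $[0,\pi]$ vanishes for nonzero even $k_2-k_1$ and reduces to a multiple of $\uo$ for odd $k_2-k_1$, which is then killed by $\int_{\mS^{m-1}}\uo\,\mathrm{d}\sigma=0$. This is shorter and makes the mechanism behind the orthogonality in $k$ transparent (it is the orthogonality of the circle exponentials, supplemented by the parity of the sphere measure for the odd case), at the cost of requiring the observation that the two factors live in a commutative subalgebra where de Moivre applies; the paper's combinatorial route is more pedestrian but needs no such structural remark. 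Both yield the identical constant $2\overline{\mathbf{a}}_{\mathbf{1}}\mathbf{a_2}(2c)^{k_1+1}\pi^{m/2+1}\Gamma(k_1+1)/\Gamma(m/2)$.
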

\begin{proof}
We will assume that $k_1\leq k_2$, the case $k_1> k_2$ being completely analogous. We have
\begin{align*}
\left\langle m_{k_1}(\bfx)e^{-\vert \bfx \vert^2/4c} \right.&\left., m_{k_2}(\bfx)e^{-\vert \bfx \vert^2/4c} \right\rangle\\
= 2\overline{\mathbf{a}}_\mathbf{1} &\left( \int_{-\infty}^{+\infty} \int_0^{+\infty} \int_{\mS^{m-1}} (x_0-\ux)^{k_1} (x_0+\ux)^{k_2} e^{-\vert \bfx \vert^2/2c} \mathrm{d}x_0 \mathrm{d}r \mathrm{d}\sigma \right) \mathbf{a_2}\\
= 2\overline{\mathbf{a}}_\mathbf{1} &\left( \int_{-\infty}^{+\infty} \int_0^{+\infty} \int_{\mS^{m-1}} (x_0^2+r^2)^k (x_0+\ux)^{K} e^{-\vert \bfx \vert^2/2c} \mathrm{d}x_0 \mathrm{d}r \mathrm{d}\sigma \right) \mathbf{a_2}
\end{align*}
where $k=k_1$ and $K=k_2-k_1$. Going over to polar coordinates $(r,\theta)$ by putting $x_0=\sqrt{2c} R\cos \theta,r= \sqrt{2c} R\sin \theta$ and integrating over the upper half plane, the inner product equals
\begin{align*}
2\overline{\mathbf{a}}_\mathbf{1} (\sqrt{2c})^{2k+K+2}\ \underbrace{\int_0^{+\infty} R^{2k+K+1} e^{\left(-R^2\right)} \mathrm{d}R}_{\textrm{A}}\ \underbrace{\int_0^{\pi} \int_{\mS^{m-1}} (\cos \theta+\sin \theta\ \uo)^{K} \mathrm{d}\theta \mathrm{d}\sigma}_{\textrm{B}}\ \mathbf{a_2}.
\end{align*}
Writing the integrand of B as $\sum_{i=0}^K \binom{K}{i} \cos^i \theta \sin^{K-i}\theta\ \uo^{K-i}$, we can split B into a sum of products of integrals:
\begin{align*}
\int_0^{\pi} \int_{\mS^{m-1}} (\cos \theta+\sin \theta\ \uo)^{K} \mathrm{d}\theta \mathrm{d}\sigma 
= \sum_{i=0}^K \binom{K}{i} \underbrace{\int_0^{\pi} \cos^i \theta \sin^{K-i}\theta\ \mathrm{d}\theta}_{\textrm{B1}}\ \underbrace{\int_{\mS^{m-1}} \uo^{K-i} \mathrm{d}\sigma}_{\textrm{B2}}.
\end{align*}
The integral B1 vanishes for odd powers $i$. For $i=2I$ its value is given by
\begin{align*}
\textrm{B1} = \frac{2I-1}{K-2I+1} \frac{2I-3}{K-2I+3}  \ldots \frac{1}{K-1} 
\begin{cases}
\frac{K-1}{K}\frac{K-3}{K-2} \ldots \frac{1}{2}\ \pi \phantom{2} \qquad K \textrm{ even},\\
\frac{K-1}{K}\frac{K-3}{K-2} \ldots \frac{2}{3}\ 2 \phantom{\pi} \qquad K \textrm{ odd}.
\end{cases}
\end{align*}
In addition, B2 vanishes for odd values of $K-i$. For $K=2L$ and $i=2I$ its value is given by
\begin{align*}
\textrm{B2} = (-1)^{L-I} \frac{2\pi^{m/2}}{\Gamma(m/2)}.
\end{align*}\
The integral B thus simplifies to
\begin{align*}
B&=\pi \frac{2\pi^{m/2}}{\Gamma(m/2)} \sum_{I=0}^L \frac{(2L)!}{(2I)!(2L-2I)!} \frac{(2I)!}{2^I I!} \frac{2^I(2L-2I)!L!}{(2L)!(L-I)!}\frac{(2L)!}{2^{2L}(L!)^2}(-1)^{L-I} \\
&=\frac{1}{2^{2L}} \pi \frac{2\pi^{m/2}}{\Gamma(m/2)} \binom{2L}{L} \sum_{I=0}^L \binom{L}{I}(-1)^{L-I}.
\end{align*}
The summation in the last factor is the binomial expansion of $(1-1)^L$, which is zero unless $L=0$. This implies that $K$ has to be zero, so $k_1$ must equal $k_2$. Using the definition of the gamma function, the inner product subsequently reads
\begin{align*}
\left\langle \psi_{0,k_1}, \psi_{0,k_2} \right\rangle
&= 2\overline{\mathbf{a}}_\mathbf{1} (2c)^{k_1+1} \pi \frac{2\pi^{m/2}}{\Gamma(m/2)} \delta_{k_1k_2} \int_0^{+\infty} R^{2k_1+1} e^{\left(-R^2\right)} \mathrm{d}R\ \mathbf{a_2}\\
&= 2\overline{\mathbf{a}}_\mathbf{1} (2c)^{k_1+1} \frac{2\pi^{m/2+1}}{\Gamma(m/2)} \delta_{k_1k_2} \frac{\Gamma(k_1+1)}{2} \mathbf{a_2},
\end{align*}
which proves the lemma.
\end{proof}
Now we can determine the inner product of two Clifford-Hermite functions $\psi_{j_1,k_1}$ and $\psi_{j_2,k_2}$. Different cases will be distinguished corresponding to the parity of $j_1$ and $j_2$.
\begin{theorem}
Let $\psi_{j_i,k_i}=(\bfx-cD_0)^{j_i}m_{k_i}(\bfx)\exp(-\vert \bfx \vert^2/4c)$ with $m_{k_i}(\bfx)=(e_0-1)(x_0+\ux)^{k_i} \mathbf{a_i}$ and $\mathbf{a_i}\in Cl_{m+1}$ for $i=1,2$.
The inner product of these two Clifford-Hermite functions $\psi_{j_1,k_1}$ and $\psi_{j_2,k_2}$ is given by
\begin{align*}
\left\langle \psi_{j_1,k_1}, \psi_{j_2,k_2} \right\rangle = A(j_1,k_1) \delta_{j_1j_2} \delta_{k_1k_2}
\end{align*}
with
\begin{align*}A(j_1,k_1)=
\begin{cases}
2\overline{\mathbf{a}}_\mathbf{1} \mathbf{a_2}(2c)^{2t_1+k_1+1} t_1! (k_1+t_1)! \frac{\pi^{m/2+1}}{\Gamma(m/2)} & j_1=2t_1,\\
2\overline{\mathbf{a}}_\mathbf{1} \mathbf{a_2}(2c)^{2t_1+k_1+2} t_1! (k_1+t_1+1)! \frac{\pi^{m/2+1}}{\Gamma(m/2)} \qquad & j_1=2t_1+1.
\end{cases}
\end{align*}
\end{theorem}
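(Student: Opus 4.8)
The plan is to run the ladder identity \eqref{proces} all the way down to degree zero. Assume without loss of generality that $j_1\le j_2$ (the case $j_1>j_2$ is handled analogously, lowering the second argument instead). The scalars $-c\,C(j_2,k_2)$ are real — recall $c\in\mR^+_0$ is fixed by $\beta=4c$ — so they commute past the $Cl_{m+1}$-valued inner product, and $j_1$ successive applications of \eqref{proces} give
\begin{equation*}
\langle \psi_{j_1,k_1},\psi_{j_2,k_2}\rangle=\Bigl(\prod_{i=0}^{j_1-1}(-c)\,C(j_2-i,k_2)\Bigr)\,\langle \psi_{0,k_1},\psi_{j_2-j_1,k_2}\rangle .
\end{equation*}

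First I would dispose of the off-diagonal case $j_1<j_2$. Then $j_2-j_1\ge 1$ and
\begin{equation*}
\langle \psi_{0,k_1},\psi_{j_2-j_1,k_2}\rangle=\langle \tilde{D_c}^\dagger\psi_{0,k_1},\psi_{j_2-j_1-1,k_2}\rangle=0,
\end{equation*}
because $\tilde{D_c}^\dagger\psi_{0,k}=-c\,e^{-\vert\bfx\vert^2/4c}D_0m_k=0$ since $m_k$ is a null-solution of $D_0$ (equivalently $C(0,k)=0$). This produces the factor $\delta_{j_1j_2}$. For $j_1=j_2=:j$, reindexing $\ell=j-i$ turns the product above into $\prod_{\ell=1}^{j}(-c)\,C(\ell,k_2)$ and leaves $\langle \psi_{0,k_1},\psi_{0,k_2}\rangle$, which by Lemma \ref{innermonogeen} is proportional to $\delta_{k_1k_2}$; this gives $\delta_{k_1k_2}$ and forces $k_1=k_2=:k$.

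The remaining step is to evaluate $\prod_{\ell=1}^{j}(-c)\,C(\ell,k)$ and multiply by $\langle \psi_{0,k},\psi_{0,k}\rangle$ from Lemma \ref{innermonogeen}. I would split the product by the parity of $\ell$, using $C(2s,k)=-2s$ and $C(2s+1,k)=-2(k+s+1)$ from Theorem \ref{differential}: for $j=2t$ this gives $\prod_{\ell=1}^{2t}C(\ell,k)=\bigl(\prod_{s=1}^{t}(-2s)\bigr)\bigl(\prod_{s=0}^{t-1}(-2)(k+s+1)\bigr)=4^t\,t!\,(k+t)!/k!$, so with the factor $(-c)^{2t}$ the scalar equals $(2c)^{2t}\,t!\,(k+t)!/k!$; for $j=2t+1$ the single extra factor $(-c)\,C(2t+1,k)=2c(k+t+1)$ upgrades it to $(2c)^{2t+1}\,t!\,(k+t+1)!/k!$. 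Multiplying by $\langle \psi_{0,k},\psi_{0,k}\rangle=2\overline{\mathbf{a}}_\mathbf{1}\mathbf{a_2}\,(2c)^{k+1}\pi^{m/2+1}\Gamma(k+1)/\Gamma(m/2)$ and using $\Gamma(k+1)=k!$ to cancel the denominator reproduces exactly the two forms of $A(j_1,k_1)$. The structural part of the argument is entirely dictated by \eqref{proces}; the only genuine work is this final consolidation of the products into factorials, and the main point to be careful about is tracking that each step of \eqref{proces} lowers \emph{both} indices while the coefficients $-c\,C(\cdot,\cdot)$ are real and hence pass freely through the Clifford-valued inner product.
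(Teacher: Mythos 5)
Your proposal is correct and follows essentially the same route as the paper: repeated application of the ladder identity \eqref{proces}, vanishing of the off-diagonal terms because $\tilde{D_c}^\dagger\psi_{0,k}=-c\,e^{-\vert\bfx\vert^2/4c}D_0m_k=0$, and Lemma \ref{innermonogeen} for the diagonal base case. The only difference is organizational: the paper first splits by the parity of $j_1,j_2$ and reduces the odd/odd case to the even/even one, whereas you run a single uniform descent and split the product of coefficients $-c\,C(\ell,k)$ by parity at the end; the resulting factorials agree with $A(j_1,k_1)$.
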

\begin{proof}
The inner product $\left\langle \psi_{j_1,k_1} \middle\vert \psi_{j_2,k_2} \right\rangle$ vanishes when $j_1$ and $j_2$ have a different parity. Indeed, repeating the procedure of \eqref{proces} leads to the operator $\tilde{D_c}^\dagger$ acting on a function $\psi_{0,k}$, which makes the inner product zero.\\
When $j_1$ and $j_2$ have the same parity, odd and even cases have to be treated separately. We first consider the case $j_1=2t_1, j_2=2t_2$ and, without loss of generality, $t_1\geq t_2$. Using equation \eqref{proces}, one gets
\begin{align*}
\langle \psi_{2t_1,k_1}, \psi_{2t_2,k_2} \rangle &= \left\langle \psi_{2t_1-1,k_1}, \tilde{D_c}^\dagger \psi_{2t_2,k_2} \right\rangle\\
&= 2ct_2 \ \left\langle \psi_{2t_1-2,k_1}, \tilde{D_c}^\dagger \psi_{2t_2-1,k_2} \right\rangle\\
&= (2c)^2t_2(k_2+t_2)\ \left\langle \psi_{2t_1-2,k_1}, \psi_{2t_2-2,k_2} \right\rangle\\
&= (2c)^{2t_2} t_2! \frac{(k_2+t_2)!}{k_2!}\ \left\langle \psi_{2(t_1-t_2),k_1}, \psi_{0,k_2} \right\rangle
\intertext{which vanishes if $t_1\neq t_2$. Using Lemma \ref{innermonogeen}, the inner product of the Clifford-Hermite functions of even degree is given by}
\langle \psi_{2t_1,k_1}, \psi_{2t_2,k_2} \rangle &= 2\overline{\mathbf{a}}_\mathbf{1} \mathbf{a_2}\ (2c)^{2t_1+k_1+1} t_1! (k_1+t_1)! \frac{\pi^{m/2+1}}{\Gamma(m/2)}\ \delta_{t_1t_2}\delta_{k_1k_2}.
\end{align*}
Now suppose $j_1=2t_1+1, j_2=2t_2+1$. Without loss of generality, we take again $t_1\geq t_2$. Using equation \eqref{proces} and the previous result, one gets
\begin{align*}
\langle \psi_{2t_1+1,k_1}, \psi_{2t_2+1,k_2} \rangle &= \left\langle \psi_{2t_1,k_1}, \tilde{D_c}^\dagger \psi_{2t_2+1,k_2} \right\rangle\\
&= 2c(k_2+t_2+1) \ \left\langle \psi_{2t_1,k_1}, \psi_{2t_2,k_2} \right\rangle\\
&= (2c)^{2t_2+1} t_2! \frac{(k_2+t_2+1)!}{k_2!}\ \left\langle \psi_{2(t_1-t_2),k_1}, \psi_{0,k_2} \right\rangle
\intertext{which vanishes if $t_1\neq t_2$. The inner product of Clifford-Hermite functions of odd degree is thus given by}
\langle \psi_{2t_1+1,k_1}, \psi_{2t_2+1,k_2} \rangle &= 2\overline{\mathbf{a}}_\mathbf{1} \mathbf{a_2}\ (2c)^{2t_1+k_1+2} t_1! (k_1+t_1+1)! \frac{\pi^{m/2+1}}{\Gamma(m/2)}\ \delta_{t_1t_2}\delta_{k_1k_2}.
\end{align*}
This proves the theorem.
\end{proof}
Analogous to their classical counterparts, the Clifford-Hermite functions $\psi_{j,k}$ satisfy a scalar differential equation. We end this section with the following theorem.
\begin{theorem}
The Clifford-Hermite functions $\psi_{j,k}$ are solutions of the scalar differential equation
\begin{align}
\left(cD_0^2 + \frac{\vert\bfx\vert^2}{4c}\right) \psi_{j,k}(\bfx) =(j+k+1)\psi_{j,k}(\bfx). \label{diffvgl2}
\end{align}
\end{theorem}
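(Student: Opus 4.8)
The plan is to conjugate the operator $cD_0^2 + \vert\bfx\vert^2/(4c)$ by the Gaussian factor $\exp(-\vert\bfx\vert^2/(4c))$, reducing the statement to a purely polynomial eigenvalue identity for $H_j(m_k)$ that is then read off from Theorem \ref{differential} and Lemma \ref{evenodd}. First note that the operator is genuinely scalar: relation (ii) of Theorem \ref{theorem1} gives $D_0^2 = -(\partial_{x_0}^2 + \pR^2)$, so $cD_0^2 + \vert\bfx\vert^2/(4c) = -c(\partial_{x_0}^2 + \pR^2) + (x_0^2 + r^2)/(4c)$, the familiar harmonic-oscillator form; this justifies the word ``scalar'' in the statement.

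Next, specialising the identity \eqref{alpha} to $\alpha = 4c$ yields the operator equality $D_0 \circ \exp(-\vert\bfx\vert^2/(4c)) = \exp(-\vert\bfx\vert^2/(4c)) \circ (D_0 - \bfx/(2c))$, and iterating it once gives $D_0^2 \circ \exp(-\vert\bfx\vert^2/(4c)) = \exp(-\vert\bfx\vert^2/(4c)) \circ (D_0 - \bfx/(2c))^2$. Expanding the square and invoking $\{D_0,\bfx\} = -2(\mE+1)$ (relation (iii)) together with $\bfx^2 = -\vert\bfx\vert^2$ (relation (i)) gives $(D_0 - \bfx/(2c))^2 = D_0^2 + (\mE+1)/c - \vert\bfx\vert^2/(4c^2)$. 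Since $\psi_{j,k} = H_j(m_k)\exp(-\vert\bfx\vert^2/(4c))$ and both $\vert\bfx\vert^2$ and the Gaussian are scalar, multiplying by $c$ and adding the term $\vert\bfx\vert^2/(4c)\,\psi_{j,k}$ makes the contribution $-\vert\bfx\vert^2/(4c)$ cancel, leaving
\[
\left(cD_0^2 + \frac{\vert\bfx\vert^2}{4c}\right)\psi_{j,k}(\bfx) = \exp\!\left(-\frac{\vert\bfx\vert^2}{4c}\right)\bigl(cD_0^2 + \mE + 1\bigr)H_j(m_k)(\bfx).
\]

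It then remains to compute $(cD_0^2 + \mE + 1)H_j(m_k)$. Theorem \ref{differential} gives $cD_0^2 H_j(m_k) = \bfx D_0 H_j(m_k) - C(j,k)H_j(m_k)$, so this operator acts on $H_j(m_k)$ as $(\mE + \bfx D_0) + 1 - C(j,k)$, and by Lemma \ref{evenodd} one has $(\mE + \bfx D_0)H_j(m_k) = B(j,k)H_j(m_k)$. Hence the eigenvalue is $B(j,k) + 1 - C(j,k)$; substituting $B = k$, $C = -2t$ when $j = 2t$, and $B = -(k+1)$, $C = -2(k+t+1)$ when $j = 2t+1$, one checks that in both parities this equals $j+k+1$. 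Combined with the displayed identity this yields $(cD_0^2 + \vert\bfx\vert^2/(4c))\psi_{j,k} = (j+k+1)\psi_{j,k}$.

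I expect the only delicate point to be the bookkeeping in the conjugation step: one must use the correct value $\alpha = 4c$ in \eqref{alpha} and carefully track the signs when expanding $(D_0 - \bfx/(2c))^2$ through the $\mathfrak{osp}(1\vert 2)$-relations. Everything else is an immediate consequence of results already established in the excerpt.
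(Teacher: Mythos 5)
Your proof is correct and follows essentially the same route as the paper: conjugation by the Gaussian via \eqref{alpha} with $\alpha=4c$, combined with the differential equation of Theorem \ref{differential}, the eigenvalue relation of Lemma \ref{evenodd}, and the $\mathfrak{osp}(1\vert2)$-relations. Your packaging (conjugating the full operator first and reducing to the polynomial eigenvalue $B(j,k)+1-C(j,k)=j+k+1$) is a slightly tidier organisation of the same computation, and the parity check of the eigenvalue is right.
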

\begin{proof}
Substituting $\alpha=4c$ in equation \eqref{alpha}, we obtain
\begin{align*}
e^{-\vert\bfx\vert^2/4c} D_0 H_j(m_k)(\bfx) &=\left(D_0 + \frac{\bfx}{2c} \right) \psi_{j,k}(\bfx)
\intertext{and}
e^{-\vert\bfx\vert^2/4c} D_0^2 H_j(m_k)(\bfx) &=\left(D_0^2 + \frac{1}{2c}(D_0\bfx+\bfx D_0) +\frac{\bfx^2}{4c^2} \right) \psi_{j,k}(\bfx).
\end{align*}
Multiplying the differential equation for the Clifford-Hermite polynomials in Theorem \ref{differential} on the left with $\exp{\left(\frac{-\vert\bfx\vert^2}{4c}\right)}$ and substituting the above results, one gets
\begin{align*}
\left(cD_0^2 -(\mE+1) +\frac{\bfx^2}{4c} -\bfx D_0 - \frac{\bfx^2}{2c} +C(j,k) \right) \psi_{j,k}(\bfx) &=0
\end{align*}
with
\begin{align*}
C(j,k) =
\begin{cases}
-j \phantom{(2+k+1)} \qquad j \textrm{ even}, \\
-(2k+j+1) \qquad j \textrm{ odd}
\end{cases}
\end{align*}
as in Theorem \ref{differential}. Because of Lemma \ref{Drelaties}, we find
\begin{align*}
\mE \psi_{j,k}(\bfx)&= \sum_{i=0}^\infty \frac{1}{i!} \left(\frac{1}{4c} \right)^i (-2i \bfx^{2i-1} + \bfx^{2i} D_0) H_j(m_k)(\bfx)\\
&= \left(-\bfx D_0 +B(j,k) \right) \psi_{j,k}(\bfx)
\end{align*}
with
\begin{align*}
B(j,k) =
\begin{cases}
k\phantom{-({}+1){}} \qquad j \textrm{ even}, \\
-(k+1) \qquad j \textrm{ odd}.
\end{cases}
\end{align*}
as in Lemma \ref{evenodd}. Using this result and the $\mathfrak{osp}(1\vert2)$-relations, we have
\begin{align*}
\left(cD_0^2 -\frac{\bfx^2}{4c} -B(j,k) -1 +C(j,k) \right) \psi_{j,k}(\bfx) &=0
\end{align*}
which simplifies to
\begin{align*}
\left(cD_0^2  -\frac{\bfx^2}{4c} \right) \psi_{j,k}(\bfx) &=\left(j+k+1 \right) \psi_{j,k}(\bfx).
\end{align*}
An alternative proof can be obtained using induction.
\end{proof}
%
%
\section{Conclusions}
\setcounter{equation}{0}
\label{conc}
The slice Dirac operator as defined in this paper, revealed the $\mathfrak{osp}(1\vert2)$ Lie superalgebra structure of the theory of slice monogenic functions. Under the condition that this slice Dirac operator $D_0$ is self-adjoint, an inner product is defined on $L_{2}(\mR^{m+1},r^{1-m}\mathrm{d}\bfx)\otimes Cl_{m+1}$. Based on the kernel of $D_0$ and using the $\mathfrak{osp}(1\vert2)$-relations, it has been possible to define Clifford-Hermite polynomials $h_{j,k}$ and Clifford-Hermite functions $\psi_{j,k}$ which exhibit analogous properties to their classical counterparts. These Clifford-Hermite functions $\psi_{j,k}$ are orthogonal with respect to the inner product defined in Section \ref{hilbert} and are solutions of a scalar differential equation.
Further research will focus on the construction of a Fourier transform in this context and investigate the advantages of the $\mathfrak{osp}(1\vert2)$-structure for the study of slice monogenic functions. We will also study the definition of an inner product as a two-dimensional integral over one slice.
\section{Acknowledgements}
L. Cnudde and H. De Bie are supported by the UGent BOF starting grant 01N01513. G. Ren is partially supported by the NNSF of China (11071230, 11371337) and RFDP (20123402110068).
%



\end{document}